\documentclass[letterpaper,11pt,reqno]{amsart} 
\usepackage[portrait,margin=1in]{geometry} 

\usepackage{mathrsfs,xfrac} 
\usepackage[colorlinks=true,linkcolor=blue,citecolor=blue,urlcolor=blue]{hyperref} 
\usepackage{amsmath,amssymb,amsthm,amsfonts,amsbsy,latexsym,dsfont,color,graphicx,enumitem}

\usepackage{caption}
\usepackage{regexpatch}
\usepackage{comment}
\usepackage{todonotes}
\makeatletter
\xpatchcmd{\@todo}{\setkeys{todonotes}{#1}}{\setkeys{todonotes}{inline,#1}}{}{}
\makeatother
\usepackage{algorithm}
\usepackage{algpseudocode}

\newtheorem{thm}{Theorem}[section]
\newtheorem{lem}[thm]{Lemma}
\newtheorem{cor}[thm]{Corollary}
\newtheorem{prop}[thm]{Proposition}

\newtheorem{obs}[thm]{Observation}

\newtheorem{ex}[thm]{Example}

\newtheorem{ques}[thm]{Question}

\renewcommand{\le}{\leqslant}  
\renewcommand{\ge}{\geqslant}

\newcommand{\abs}[1]{\left\vert#1\right\vert}

     \let\gl=\lambda

\newcommand{\cF}{\mathcal{F}}

\newcommand{\mvU}{\boldsymbol{U}}

\newcommand{\bN}{\mathbb{N}}
\newcommand{\bR}{\mathbb{R}}

\newcommand{\bZ}{\mathbb{Z}}        
\DeclareMathOperator{\pr}{\mathds{P}}

\DeclareMathOperator{\Aut}{Aut}

\newcommand{\w}{\mathbf{w}}
\newcommand{\fmsf}{\mathrm{FMSF}}
\newcommand{\wmsf}{\mathrm{WMSF}}
\newcommand{\fmax}{\mathrm{FMaxSF}}
\newcommand{\wmax}{\mathrm{WMaxSF}}

\usetikzlibrary{calc}

\begin{document}
\title{$\beta$-Skewed Maximal Spanning Forests}
\author[Chatterjee]{Shirshendu Chatterjee}
\address{Shirshendu Chatterjee\\
Department of Mathematics\\
The City College of New York and CUNY Graduate Center\\
New York, NY,
USA
}
\email{shirshendu@ccny.cuny.edu}
\author[Chen]{Yang Chen}
\address{Yang Chen\\
Department of Mathematics\\
The City College of New York\\
New York, NY,
USA
}
\email{yang.chen57@stu-mail.ccny.cuny.edu}
\author[Terlov]{Grigory Terlov}
\address{
Grigory Terlov \\
Department of Statistics and Operations Research\\
University of North Carolina\\
Chapel Hill, NC \\
USA
}
\email{gterlov@unc.edu}
\begin{abstract}
The Free $\w$-Maximal Spanning Forest (FMaxSF) is a weighted generalization of the classical Free Minimal Spanning Forest ($\fmsf$) that is able to detect nonhyperfiniteness in percolation on nonunimodular graphs. We introduce a parameterized family of invariant random spanning forests that interpolates between these models. For every finite positive value of the parameter $\beta$, the construction retains many of the desired properties of $\fmax$ while also admitting the finite-subtree forcing property of $\fmsf$. We study local limits of these forests and, in particular, show that the small-$\beta$ limit of the wired variant coincides with $\fmsf$ if and only if $p_h=p_u$, where $p_h$ is the threshold for the existence of heavy clusters and $p_u$ is the uniqueness threshold for Bernoulli$(p)$ percolation. Finally, we show that the Free and the Wired $\w$-Maximal Spanning Forests may coincide even if $p_h<p_u$, providing a negative answer to a question from \cite{TT25}.
\end{abstract}

\maketitle

%%%%%%%%%%%%%%%%%%%%%%%%%%%%%%%%%%%%%%%%%%%%%%%%%%%%%%%%%%
\section{Introduction}
%%%%%%%%%%%%%%%%%%%%%%%%%%%%%%%%%%%%%%%%%%%%%%%%%%%%%%%%%%
Consider an infinite connected locally finite quasi-transitive graph $G=(V,E)$. Fixing a closed subgroup $\Gamma$ of automorphisms $\Aut(G)$ that acts quasi-transitively on $G$ equips $V$ with a relative weight function $\w_\Gamma$, which in a sense quantifies the asymmetry of the action of $\Gamma$ on $G$. In particular, this function is constant on each $\Gamma$-orbit if and only if $\Gamma$ is unimodular.
Since Cayley graphs are always unimodular, and due to the availability of various tools, this case has been a primary focus in percolation literature. Indeed, when the weights are nontrivial, many classical techniques do not translate, requiring one to either exploit the structure induced by the weights or develop weighted analogs of classical unimodular tools and objects.
The systematic study of percolation in the nonunimodular setting began in \cite{HaggstromTrees,Haggstrom99,BLPS99inv,Timar06nonu} and recently saw a lot of progress \cite{Pengfei18,Hutchcroft20, Tang_WUFS_light,CTT22,HutchcroftPan24rel,TT25,HeavyRepulsion,Bowen_Oneended}.

The Free Minimal Spanning Forest ($\fmsf$) is a well-studied object in discrete probability; we refer the reader to \cite{LPS06msf,LyonsBook} for an overview and the significance of this model.
The main object introduced in \cite{CTT22} is a weighted analog of this model, called the Free $\w$-Maximal Spanning Forest. The idea behind their construction was to generalize $\fmsf$ to produce a suitable infinitely ended subforest of a weighted graph in contexts of measured group theory and percolation on nonunimodular graphs. 

\subsection{Minimal and maximal spanning forests}
The classical minimal spanning forests, the $\w$-maximal spanning forests of \cite{CTT22}, and the $\beta$-skewed maximal spanning forests introduced here arise from the same cycle-cutting algorithm, applied with different linear orders on the edges of the underlying graph. First, given a vertex weight function $\w:V\to\bR^+$, we extend it to the edges by setting 
\[
\w((x,y)):=\min\{\w(x),\w(y)\}.
\]
Next given random i.i.d.\ labels $\mvU=(U_e)_{e\in E}$ each with $\mathrm{Uniform}[0,1]$ distribution and $\beta\in[0,\infty)$, for each $e\in E$ we define 
\[
X_{e,\beta}:=\w(e)^\beta(1-U_e).
\]

Consider the following orders on $E$, which are a.s.\ linear for all $\beta\ge0$:
\begin{align*}
    e&\prec_0 e'  \,\,\qquad:\Longleftrightarrow\quad1-U_e<1-U_{e'},\\
    e &\prec_{\w} e'\qquad:\Longleftrightarrow\qquad\w(e)<\w(e') \quad \text{or}\quad \bigl(\w(e)=\w(e') \;\;\mbox{and}\;\; 1-U_e<1-U_{e'}\bigr),\\
    e &\prec_\beta e'\,\qquad:\Longleftrightarrow\qquad X_{e,\beta}<X_{e',\beta}   \quad \text{or}\quad \bigl(X_{e,\beta}=X_{e',\beta}  \;\;\mbox{and}\;\; 1-U_e<1-U_{e'}\bigr).
\end{align*}
Here, for each fixed value of $\beta$ the tie-breaker rule in $\prec_\beta$ is a.s.\ redundant; however since we will be interested in varying the parameter it ensures that the order is linear for all $\beta\ge0$ simultaneously.

For each of these orders, the \textbf{free} version of the forest is obtained by simultaneously deleting the least edge in every finite cycle, while the \textbf{wired} version additionally deletes every edge that is least on some bi-infinite simple path. The resulting forests produced by 
\begin{itemize}
    \item[$\prec_0$] are called the \textbf{Free}, resp.\ \textbf{Wired}, \textbf{Minimal Spanning Forest} and are denoted by\\ $\fmsf(G,\mvU)$, resp.\ $\wmsf(G,\mvU)$;
    \item[$\prec_{\w}$] are called the \textbf{Free}, resp.\ \textbf{Wired}, $\w$-\textbf{Maximal Spanning Forest} and are denoted by\\$\fmax(G,\mvU)$, resp.\ $\wmax(G,\mvU)$;
    \item[$\prec_{\beta}$] are called the \textbf{$\beta$-Skewed} \textbf{Free}, resp.\ \textbf{Wired}, $\w$-\textbf{Maximal Spanning Forest} and are denoted by $\mathfrak{F}_\beta(G,\mvU)$, resp.\ $\mathfrak{W}_\beta(G,\mvU)$.
\end{itemize} 
When the dependency on the graph $G$ or labels $\mvU$ is not important to highlight, we omit them from notation. Finally, we note that one usually defines $\fmsf$ and $\wmsf$ by deleting the edge with the largest $U_e$ label in each cycle or bi-infinite path. However, for consistency in our presentation, it is easier to work with $\prec_0$. In particular, $\mathfrak{F}_0(G,\mvU)=\fmsf(G,\mvU)$ and $\mathfrak{W}_0(G,\mvU)=\wmsf(G,\mvU)$.

There are two main applications of minimal spanning forests that motivate these weighted constructions. 
First, the behaviors of minimal spanning forests on a graph $G$ characterize properties of Bernoulli$(p)$ percolation on $G$. For example, \cite[Proposition 3.6]{LPS06msf} shows that $\fmsf(G)$ and $\wmsf(G)$ coincide a.s.\ if and only if for a.e.\ $p\in(0,1)$ there is at most one infinite cluster in Bernoulli$(p)$ percolation on $G$. 
Second, $\fmsf$ can be applied to configurations of invariant percolation to extract infinitely ended subforests, e.g.\ \cite[Lemma 7.4]{BLPS99inv}. This enabled results in measured group theory \cite{GL09,Thom_Expected_Degree_FMSF,Tarski,GheysensMonod} and percolation \cite[Theorem 1.3]{BLPS99inv} (see also \cite[Theorem 8.21]{LyonsBook}), with similar recent uses in measurable combinatorics \cite{BTT_Whitney}. For various applications of $\wmsf$ in measurable combinatorics, see e.g.~\cite{CMTD_Brooks,berczi2026cycle}.

In the nonunimodular setting or in the measure class preserving setting in measured group theory (which is in many ways an analogous assumption) the existence of infinitely many ends in a forest is not as strong of a property. Indeed, although graphs such as the grandparent graph (see Example~\ref{ex:gp}) have infinitely many ends, they are still hyperfinite. To address this phenomenon, the authors of \cite{CTT22,AnushRobin} developed appropriate notions of weighted ends (called $\w$-nonvanishing ends, see Section~\ref{sec:prelim}) that enable generalizations of several foundational results from the unimodular and probability measure preserving settings. 

Motivated by the use of $\fmsf$ in the proof of the Gaboriau--Lyons theorem \cite{GL09}, in \cite{CTT22} Chen, Terlov, and Tserunyan introduced $\fmax$ to produce a subforest with infinitely many $\w$-nonvanishing ends under appropriate assumptions. To be precise, in the measured-group theoretic setting they assumed that the a.e.~ component of the underlying graph has at least $3$ $\w$-nonvanishing ends. At the same time, in the context of invariant percolation they imposed an additional assumption of existence of a $\w$-trifurcation vertex (see Section~\ref{sec:prelim}).
Interestingly, our Theorem~\ref{thm:prod} can be used to give an example of a measurable nonhyperfinite graph on which $\fmax$ has at most $2$ $\w$-nonvanishing ends, showing the necessity of the multi-endedness assumption for the methods of \cite{CTT22}.
In \cite{TT25}, Terlov and Tim\'ar showed further applications of $\fmax$ in percolation theory, in particular, to establish the continuity of the percolation phase transition at $p_h$ for weighted-nonamenable graphs.

The main property of the order $\prec_{\w}$ is that it gives the weights absolute priority. This choice is natural in the measured-group-theoretic setting, where the random labels $\mvU$ are replaced by an arbitrary Borel linear order. On the one hand, it helps because it makes it easier for the forest to capture the weighted geometry; on the other hand, the resulting forest loses the local flexibility inherent to $\fmsf$. To be precise, $\fmax$ does not have a convenient property of $\fmsf$ that any finite subtree of $G$ can be made a part of $\fmsf$ with positive probability.
Moreover, weakening the role of the random labels also weakens the connection to Bernoulli$(p)$ percolation. For example, \cite[Question 7.4]{TT25} asks whether maximal spanning forests characterize the nonuniqueness of heavy clusters in Bernoulli percolation (analogous to \cite[Proposition 3.6]{LPS06msf}). To our surprise, the answer turns out to be negative; see Theorem~\ref{thm:prod}.

The skewed order introduced in this paper relaxes this absolute priority and introduces a competition between weights and random labels. We show that, for every
$\beta\in(0,\infty)$, the resulting free forest admits the desired finite-subtree forcing property while retaining many key properties of $\fmax$. In particular, Theorem~\ref{thm:main_CTT} allows us to remove the assumption on a $\w$-trifurcation vertex from \cite[Theorem 1.10]{CTT22}. 

Since the parameter $\beta$ also places skewed maximal forests in a common family, they are in a way analogous to finite-temperature variants for statistical mechanics models. Note that on any finite set of edges the linear order $\prec_\beta$ interpolates between $\prec_0$ and $\prec_\w$ (see Lemma~\ref{order_alignment}). 
However, as our Theorems~\ref{thm:local_lim_W}--\ref{thm:local_lim_F_large_discr} show, the local limits of the $\beta$-skewed maximal forests could exhibit subtle behavior.

Finally, in light of our negative answer to the question \cite[Question 7.4]{TT25}, the most surprising consequence of our results on the local limits of $\mathfrak{W}_{\beta}$ is that $\mathfrak{W}_{0+}$ recovers a characterization of the heavy-cluster nonuniqueness phase, based on whether it coincides with $\fmsf$; see Theorem~\ref{thm:ph_vs_pu}.

\begin{figure}[htb]
\begin{center}
	\begin{tikzpicture}[thick, scale=0.7]
	\node at (-9,6) {$\beta=0$};
    \node at (-4,6) {$\fmsf$};
    \node at (-2,6) {$\supseteq$};
    \node at (0,6) {$\mathfrak{W}_{0+}$};
    \node at (2,6) {$\supsetneq$};
    \node at (4,6) {$\wmsf$};

    \node at (-9,3) {$\beta>0$};
    \node at (-3,3) {$\mathfrak{F}_{\beta}$};
    \node at (0,3) {$\supseteq$};
    \node at (3,3) {$\mathfrak{W}_{\beta}$};

    \node at (-9,0) {$``\beta=\infty"$};
    \node at (-3,0) {$\fmax$};
    \node at (0,0) {$\supseteq$};
    \node at (3,0) {$\wmax$};
    \draw [->] 
        (2.7,3.5) 
            to node [right] {\,Theorem~\ref{thm:local_lim_W}} 
        (0.2,5.6);
    \draw [->] 
        (3,2.5) 
            to node [right] {Theorem~\ref{thm:local_lim_W}} 
        (3,0.5);
    \draw [->,dashed] 
        (-3,2.5) 
            to node [left] {\shortstack[c]{Question~\ref{ques:local_lim_F_large}\\Theorems~\ref{thm:local_lim_F},~\ref{thm:local_lim_F_large_discr}}} 
        (-3,0.5);
    \draw [->,dashed] (-2.7,3.5) to (-0.2,5.6);
    \draw [->,dashed] (-3,3.5) to (-4,5.6);
    \node at (-2.4,5) {Obs.~\ref{prop:prod_small_beta}};
    \draw (-4,6.5) -- (-4,6.8);
    \draw (-0.1,6.5) -- (-0.1,6.8);
    \draw [-] 
        (-4,6.8) 
        to node [above] {\shortstack[c]{$=\,\Leftrightarrow p_h=p_u$\\ Theorem~\ref{thm:ph_vs_pu}}} 
        (-0.1,6.8);
    \draw (-3,-0.5) -- (-3,-0.8);
    \draw (3,-0.5) -- (3,-0.8);
    \draw [-] 
        (-3,-0.8) 
        to node [below] {\shortstack[c]{$=$ if $\w$-amenable\\ Proposition~\ref{prop:amen_F=W}}} 
        (3,-0.8);

    \begin{scope}[reset cm]
      \path let \p1 = (current bounding box.east),
                \p2 = (current bounding box.west),
                \n1 = {max(\x1,-\x2)}
            in (-\n1,0) (\n1,0);
    \end{scope}
    \end{tikzpicture}	
\end{center}
\caption{This diagram depicts local convergences of the $\beta$-skewed maximal spanning forests on transitive nonunimodular graphs. Solid arrows denote established limits in full generality, while dashed ones denote limits where we needed additional assumptions. Observation~\ref{prop:prod_small_beta} provides examples where the local limit of $\mathfrak{F}_{\beta}$, as $\beta\searrow0$, attains either $\fmsf$ or $\mathfrak{W}_{0+}$.}
\end{figure}
%%%%%%%%%%%%%%%%%%%%%%%%%%%%%
\subsection{Organization}
%%%%%%%%%%%%%%%%%%%%%%%%%%%%%
In Section~\ref{sec:prelim} we briefly review all required definitions and key examples. In Section~\ref{sec:main_res} we state our main results and provide additional context and discussion. The rest of the paper is dedicated to the proofs: in Section~\ref{sec:flexibility_F} we establish key properties of $\mathfrak{F}_{\beta}$ and prove our variant of \cite[Theorem 1.10]{CTT22}; Sections~\ref{sec:loc_lim_W} and~\ref{sec:loc_lim_F} are dedicated to local limit results for $\mathfrak{W}_{\beta}$ and $\mathfrak{F}_{\beta}$, respectively; in Section~\ref{sec:char_ph_pu} we present the proofs of the results related to the heavy-cluster nonuniqueness phase.
%%%%%%%%%%%%%%%%%%%%%%%%%%%%%
\subsection{Acknowledgments}
%%%%%%%%%%%%%%%%%%%%%%%%%%%%%
We thank \'Ad\'am Tim\'ar and G\'{a}bor Pete for insightful conversations.

S.C.\ was supported in part by the NSF grant DMS-2154564.

Y.C.\ was supported by the Rich Internship Program at CCNY.

G.T.\ was supported in part by the RTG award grant (DMS-2134107) from the NSF.
%%%%%%%%%%%%%%%%%%%%%%%%%%%%%
\subsection{Disclosure of generative-AI tool use.}
%%%%%%%%%%%%%%%%%%%%%%%%%%%%%
During the preparation of this work, we used generative-AI systems (OpenAI GPT 5.6 Sol and 6 Astra) to assist with presentation and polishing of the proofs.
All AI-generated suggestions were carefully fleshed out and checked by the authors, who take full responsibility for the results and the correctness of the proofs.

%%%%%%%%%%%%%%%%%%%%%%%%%%%%%
\section{Preliminaries}\label{sec:prelim}
%%%%%%%%%%%%%%%%%%%%%%%%%%%%%
In this section, we briefly recall relevant definitions and key examples. 
Let $G=(V,E)$ be an infinite connected locally finite graph and let $\Gamma$ be a closed subgroup of $\Aut(G)$ that acts quasi-transitively on $G$. For simplicity, we often restrict to the transitive case, but the results could be extended to the quasi-transitive setting by appropriate adjustments of our arguments.

A (bond) \textbf{percolation} process on a graph $G=(V,E)$ is a probability measure $\mathbf{P}$ on $2^E$. Percolation is called $\Gamma$-invariant if $\mathbf{P}$ is invariant under the action of $\Gamma$.
We refer to elements $\omega \in 2^E$ as \textbf{configurations} and the connected components of $\omega$ are called \textbf{clusters}.
For $p \in [0,1]$, a bond percolation process on $G$ is called $\mathrm{Bernoulli}(p)$ if every edge is present in a configuration independently with probability $p$. 

Recall that a locally compact group $\Gamma$ is \textbf{unimodular} if its left Haar measure is right-invariant; in the context of groups of automorphisms of a locally finite graph, there is an equivalent, more combinatorial, formulation. First, we define the induced \textbf{relative weight function} $\w_\Gamma:V^2\to\bR^+$ on $G$, for $x,y \in V$, by
\begin{equation}\label{def:haarweights}
\w_\Gamma^y(x):=\abs{\Gamma_x y}/\abs{\Gamma_y x},
\end{equation}
where $\Gamma_v := \{\gamma\in\Gamma \mid \gamma v=v\}$ is the stabilizer of $v\in V$.
Switching the reference point $y$ in the relative weight function $\w_\Gamma^y$ changes
its value only by a multiplicative factor; thus it does not affect the orders $\prec_\w$ and $\prec_\beta$.
Similarly, all of the definitions below are also independent of the reference point. When the dependency on the reference point is not important to highlight, we omit it from the notation.

By \cite{Trofimov} $\Gamma$ is unimodular if and only if $\w_\Gamma$ is constant on each $\Gamma$-orbit. Hence, when $\Gamma$ is nonunimodular, subsets of $V$ can be of two different quantitative types: of finite total weight, to which we refer as \textbf{light}, or of infinite total weight, to which we refer as \textbf{heavy}. This yields a particularly interesting phenomenon in percolation: heavy clusters often behave like infinite clusters in the unimodular setting, while infinite light clusters sometimes exhibit properties unique to the nonunimodular setting.

As we mentioned in the introduction, one of the key applications of the free minimal spanning forest is to derive an invariant infinitely ended subforest of the graph of interest. Unfortunately, this property is much less powerful in the nonunimodular setting, and in \cite{CTT22,AnushRobin} the authors introduced the following notion of the weight of an end. First we say that an infinite set of vertices $A$ is $\w_\Gamma$-\textbf{vanishing} if any sequence of distinct vertices $(x_n)_{n\in \bN}\subseteq A$ satisfies $\w_{\Gamma}^o(x_n)\to 0$ for some $o\in V$; otherwise we call $A$ $\w_\Gamma$-\textbf{nonvanishing}. 
We say that an end $\xi$ of $G$ is $\w_\Gamma$-nonvanishing if $G$ admits a $\w_\Gamma$-nonvanishing sequence of vertices that converges to $\xi$. See \cite[Sections 2.4 and 5]{TT25} for discussion on various notions of weighted ends in applications to percolation theory.
We say that a finite connected subgraph $F\subseteq G$ is a \textbf{trifurcation} (resp.\ $\w$-\textbf{trifurcation}) if removal of $F$ from $G$ creates at least $3$ new infinite components (resp.\ $\w$-nonvanishing components). When $F$ is a single vertex, we call it a \textbf{trifurcation-vertex}.

Next, one of the main tools in the field is the Mass Transport Principle (MTP), stated as follows.

\begin{thm}[(weighted) Mass Transport Principle]\label{thm:MTP}
Let $G$ be a connected locally finite graph, $\Gamma$ be a closed subgroup of $\Aut(G)$ that acts transitively on $G$, and $\w_\Gamma$ be the induced relative weight function as in \eqref{def:haarweights}. Then for any function $f:V^2 \to \bR^+$ that is invariant under the diagonal action of $\Gamma$ we have for any $o\in V$
\[
\sum_{v\in V} f(o,v)=\sum_{v\in V} f(v,o)\w^{o}(v).
\]
\end{thm}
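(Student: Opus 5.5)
The approach is the standard proof of the weighted Mass Transport Principle: decompose the sum over $v$ by $\Gamma$-orbits of pairs, and use the stabilizer counts in \eqref{def:haarweights} to compare the two sides orbit by orbit. Since $\Gamma$ acts transitively, every pair $(o,v)$ is equivalent under the diagonal action to a pair whose first coordinate is $o$. The orbits of $\Gamma_o$ on $V$ therefore index the diagonal orbits of pairs.

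\textbf{Step 1: reduce to indicator functions.} Both sides are linear in $f$, and by monotone convergence we may pass to countable sums of nonnegative functions. So it suffices to treat $f=\mathbf 1_{\Gamma(o,y)}$, the indicator of a single diagonal orbit of a pair $(o,y)$. Any invariant $f\ge 0$ is a countable nonnegative combination of such indicators, because it is constant on diagonal orbits.

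\textbf{Step 2: compute both sides.} For the left side,
\[
\sum_v f(o,v)=\#\{v:(o,v)\in\Gamma(o,y)\}=|\Gamma_o y|.
\]
For the right side, $(v,o)\in\Gamma(o,y)$ means $v=\gamma o$ and $o=\gamma y$ for some $\gamma\in\Gamma$. Equivalently, $(o,v)$ lies in the orbit of $(y,o)$, and by transitivity this orbit is $\Gamma(o,z)$ for some $z$ with $|\Gamma_o z|=|\Gamma_y o|$ and $|\Gamma_z o|=|\Gamma_o y|$. Hence the set of such $v$ is $\Gamma_o z$, of size $|\Gamma_y o|$.

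The main obstacle is the weight: it must be constant on this set and equal to $|\Gamma_o y|/|\Gamma_y o|$. First, $\w^o(v)=|\Gamma_v o|/|\Gamma_o v|$, and this is constant as $v$ ranges over $\Gamma_o z$, because conjugating by an element of $\Gamma_o$ preserves the counts. Second, for $v=\gamma o$ with $\gamma y=o$, applying $\gamma^{-1}$ gives $|\Gamma_v o|=|\Gamma_o y|$ and $|\Gamma_o v|=|\Gamma_y o|$, using $\gamma\Gamma_x\gamma^{-1}=\Gamma_{\gamma x}$.

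So the right side equals $|\Gamma_y o|\cdot|\Gamma_o y|/|\Gamma_y o|=|\Gamma_o y|$, which matches the left side. Finiteness of the stabilizer orbits follows from local finiteness together with closedness of $\Gamma$, since stabilizers are then compact and open. We only need the finiteness of $\Gamma_x y$, which holds because $\Gamma_x$ preserves graph distance from $x$.
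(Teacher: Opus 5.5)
Your argument is correct and complete. The paper gives no proof of Theorem~\ref{thm:MTP}; it quotes the weighted Mass Transport Principle as a standard tool from the literature (cf.\ \cite{BLPS99inv,LyonsBook}). What you have written is the standard orbit-counting proof of it.

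The individual steps all check out:
\begin{itemize}
\item Diagonal orbits $\Gamma(o,y)$ correspond to $\Gamma_o$-orbits $\Gamma_o y$.
\item The left-hand count for $\mathbf{1}_{\Gamma(o,y)}$ is $|\Gamma_o y|$.
\item On the right, the support is $\gamma_0\Gamma_y o$ for any $\gamma_0$ with $\gamma_0 y=o$. It has size $|\Gamma_y o|$ and carries the constant weight $\w^o(v)=|\Gamma_o y|/|\Gamma_y o|$, by $\Gamma_{\gamma x}=\gamma\Gamma_x\gamma^{-1}$.
\item The reduction to indicators is justified by Tonelli, since $V$ is countable and so there are only countably many diagonal orbits.
\end{itemize}

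Two minor remarks. First, your separate claim that $\w^o$ is constant on $\Gamma_o z$ is redundant. Your second computation already applies to every $v=\gamma o$ with $\gamma y=o$, and these are exactly the $v$ in the support.

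Second, closedness of $\Gamma$ is never used. The counting proof needs only transitivity, the conjugation identity, and finiteness of $\Gamma_x y$, which follows from local finiteness alone as you note. So the argument in fact works for any transitive subgroup. Closedness matters only for interpreting $\w_\Gamma$ through Haar measure and the modular function of $\Gamma$.
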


The following are two of the simplest examples of transitive graphs that admit a nonunimodular subgroup of automorphisms.
\begin{ex}\label{ex:nonuni-tree}
For any $r,s\in \bN$ such that $1\le r<s$, consider an $ (r+s) $- regular tree and an orientation of edges such that every vertex has exactly $r$ outgoing edges and $s$ incoming edges. Let $\Gamma$ be the subgroup of the automorphisms of the tree that preserves the orientation of the edges. Under such $\Gamma$ each vertex $x\in V$ has $r$ neighbors of weight $\w_\Gamma^x(\cdot)=s/r$ and $s$ neighbors of weight $\w_\Gamma^x(\cdot)=r/s$.
\end{ex}

\begin{ex}[The grandparent graph]\label{ex:gp}
    The grandparent graph $\mathrm{GP}(k)$, $k\geq 2$, is constructed as follows. Starting from $T_{1,k}$, connect every vertex to the unique vertex at distance $2$ using outgoing edges. Notice that neighborhood of a vertex $x$ consits of a single parent ($\w_\Gamma^x(\cdot)=k$), a single grandparent ($\w_\Gamma^x(\cdot)=k^2$), $k$ children ($\w_\Gamma^x(\cdot)=1/k$), and $k^2$ grandchildren ($\w_\Gamma^x(\cdot)=1/k^2$). 
\end{ex}

Finally, the \textbf{local (a.s.)\ limits} of percolation processes are taken in the product topology on $2^E$. Thus, by local convergence of a random forest we mean that, almost surely, it eventually agrees with the limiting random forest on every finite set of edges. Notice that this notion of convergence is stronger than local weak convergence, which is only concerned with convergence of the laws on finite subsets of $E$.

%%%%%%%%%%%%%%%%%%%%%%%%%%%%%
\section{Main results}\label{sec:main_res}
%%%%%%%%%%%%%%%%%%%%%%%%%%%%%
 In our first result, we show that for $\mathfrak{F}_{\beta}(\omega)$ the statement of \cite[Theorem 1.10]{CTT22} holds without the assumption of the existence of a $\w$-trifurcation vertex.

\begin{thm}\label{thm:main_CTT}
Let $G$ be a locally finite connected graph, $\Gamma$ be a closed nonunimodular subgroup of $\Aut(G)$ that acts transitively on $G$, and $\w_\Gamma$ as in \eqref{def:haarweights}. Let $\mathbf{P}$ be a $\Gamma$-invariant percolation on $G$. Let $\mvU$ be the $\textrm{Uniform}[0,1]$ labels on edges of $G$ independent of everything else.

Then for $\mathbf{P}$-a.e.\ configuration $\omega$, for every heavy cluster $C \subseteq\omega$ with $\ge3$ $\w$-nonvanishing ends, for every $\beta>0$ the random forest $\mathfrak{F}_{\beta}(\omega,\mvU)$ has a tree $T \subseteq C$ with infinitely many (and no isolated) $\w_\Gamma$-nonvanishing ends a.s.

Moreover, if $\mathbf{P}$ is insertion tolerant and is such that a.e.\ configuration contains a cluster with $\ge3$ $\w$-nonvanishing ends, then the conclusion holds for every heavy cluster.
\end{thm}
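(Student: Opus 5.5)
The plan follows the strategy of \cite[Theorem 1.10]{CTT22}. The $\w$-trifurcation vertex assumption is replaced by the finite-subtree forcing property of $\mathfrak{F}_\beta$, which holds for $\beta\in(0,\infty)$. It says: for any finite tree $F$ contained in a cluster, the event that $F\subseteq\mathfrak{F}_\beta(\omega,\mvU)$ and that the components of $\mathfrak{F}_\beta$ attached to $F$ behave as in the unmodified configuration has positive conditional probability.

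To prove this, I would use the product form $X_{e,\beta}=\w(e)^\beta(1-U_e)$. Choose $\epsilon>0$ small and resample the labels on $F$ so that $1-U_e>1-\epsilon$ for $e\in F$. For the remaining edges in a large finite neighborhood, make $1-U_e$ small (less than $\delta$). Because $\beta<\infty$, every edge of $F$ then beats every competitor in the neighborhood, since $\w$ is bounded above and below on a finite set. This event has positive probability. A cycle-cutting argument, as in the $\fmsf$ case, then shows that no edge of $F$ is least on any finite cycle: any cycle through $F$ must leave $F$, and a cycle exiting the neighborhood can be handled by a limiting/monotonicity argument, as in \cite{LPS06msf}. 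This is exactly the step that fails for $\prec_\w$, where weights have absolute priority.

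Given this, I would import the first part of the proof in \cite{CTT22} essentially verbatim, since it does not use trifurcations. In a heavy cluster, every tree of $\mathfrak{F}_\beta$ that is contained in $C$ and has a $\w$-nonvanishing end arises via the weighted MTP (Theorem~\ref{thm:MTP}). Trees with only $\w$-vanishing ends are excluded because, for a heavy cluster, a mass transport sending mass from each vertex to the heaviest-weight region of its tree gives a contradiction.

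Next I would show that some tree in $C$ has at least $3$ $\w$-nonvanishing ends. Since $C$ has $\ge3$ $\w$-nonvanishing ends, there is a finite connected $F\subseteq C$ whose removal leaves $3$ $\w$-nonvanishing components. The key point is that the free forest restricted to each such component, with $F$ forced in, contains $\w$-nonvanishing rays reaching $F$, and forcing glues them into a single tree with $3$ $\w$-nonvanishing ends. This gluing step, showing that conditioning on the forced $F$ does not destroy the rays in each component, is the main obstacle. I would try a local modification argument: change labels only on $F$ and its boundary, compare $\mathfrak{F}_\beta$ before and after, and show the change is confined to edges incident to $F$.

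Once a $\w$-trifurcation exists in $\mathfrak{F}_\beta$ with positive probability, the standard weighted Burton--Keane/MTP argument of \cite{CTT22} gives infinitely many $\w$-nonvanishing ends with none isolated. For the \emph{moreover} part, insertion tolerance lets us open the finitely many edges needed to connect any heavy cluster to a cluster with $\ge3$ $\w$-nonvanishing ends, and invariance propagates the conclusion to every heavy cluster. Finally, "for every $\beta>0$ simultaneously" follows by first fixing a countable dense set of $\beta$ and then using monotonicity/continuity of the order in $\beta$ on finite edge sets (Lemma~\ref{order_alignment}).
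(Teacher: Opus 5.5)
There are two genuine gaps.

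\textbf{The three-ends step.} You flag as the main obstacle the step showing that the tree containing the forced $F$ has a $\w$-nonvanishing end in each of the three components. You leave it open, and the route you suggest would not close it.

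\begin{itemize}
\item Relabeling at $F$ does not only affect edges incident to $F$. Whether an edge $e$ survives is decided by connectivity in $\mathrm{Cone}(e,\prec_\beta)$, so a far-away edge can become the least edge of a long cycle through $F$.
\item More to the point, there is nothing to ``preserve''. The $\w$-nonvanishing trees of the unmodified forest inside a component $S$ of $C\setminus F$ need not come near $F$ at all.
\end{itemize}

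The missing idea is the cut property, Lemma~\ref{lem:cut_set}, which is where $\beta>0$ is used. Work on the forcing event $A_F$: edges of a spanning tree $T_F$ of $F$ have $X_{e,\beta}>\tfrac12 m^\beta$, and all other edges incident to $F$ have $X_{e,\beta}<\tfrac12 m^\beta$. On $A_F$, any cycle whose least edge lies in $S\cup\partial_E(S,F)$ can be rerouted through $T_F$ so that it stays inside $S\cup F\cup\partial_E(S,F)$ with the same least edge. Hence $\mathfrak{F}_\beta$ restricted to this subgraph is the free forest of the subgraph. Lemma~\ref{lem:cut_set} and Corollary~\ref{cor:F_nonvan}, applied to that subgraph, then show that the tree containing $T_F$ is $\w$-nonvanishing there. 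So it has a $\w$-nonvanishing end in $S$, for each of the three components.

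Relatedly, your claim that trees with only vanishing ends are ruled out by a mass transport is false. In $T_{1,2}$, keep for each vertex the edge to a uniformly chosen lighter neighbor. This gives an invariant forest of light rays in a heavy graph. So no argument using only invariance and heaviness can exclude light trees; nonvanishing of the trees of $\mathfrak{F}_\beta$ comes from the cut lemma, not from MTP.

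\textbf{The almost-sure upgrade.} Nothing in your plan upgrades ``some tree has a $\w$-trifurcation with positive probability'' to ``almost surely, for every heavy cluster $C$ with $\ge3$ $\w$-nonvanishing ends''. The Burton--Keane/MTP step only shows that a tree which already has a $\w$-trifurcation has infinitely many, non-isolated $\w$-nonvanishing ends. It does not raise the probability that such a tree exists.

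The paper handles this as follows. It chooses, invariantly, a maximal family $\cF(C)$ of pairwise separated $\w$-trifurcations $F$ of size at most $k_C$ with $\pr(A_F)\ge k_C^{-1}$. Since $C$ is heavy, MTP forces $\cF(C)$ to be infinite. The events $A_F$ depend on disjoint sets of labels, so one of them occurs a.s. A $0$--$1$ argument would also work: moving one edge in a linear order changes the free forest in at most two edges, so the event is invariant under finitely many label changes. Either way, you need some such step.

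\textbf{Smaller points.}
\begin{itemize}
\item The statement is for each fixed $\beta$, so no simultaneity in $\beta$ is needed. Lemma~\ref{order_alignment} could not supply it anyway, since having infinitely many $\w$-nonvanishing ends is not determined by finitely many edges.
\item For the \emph{moreover} part, inserting edges to merge clusters changes the clusters. By itself it says nothing about the forest inside the original heavy cluster. The paper instead reruns the argument of \cite[Theorem 4.15]{CTT22}, with forcing replacing the trifurcation vertex.
\end{itemize}
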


%%%%%%%%%%%%%%%%%%%%%%%%%%%%%
\subsection{Local limits of $\beta$-skewed maximal spanning forests}
%%%%%%%%%%%%%%%%%%%%%%%%%%%%%
The next set of statements concerns local limits of the $\beta$-Skewed Free and Wired Maximal Forests as $\beta$ tends to zero or infinity. While it is natural to interpret these forests as finite-temperature models interpolating between $\fmsf/\wmsf$ and $\fmax/\wmax$, we will see that the limiting behavior is not always as predictable as one might expect. We start with the statement for the wired version.

\begin{thm}[Local limit for $\mathfrak{W}_\beta$]\label{thm:local_lim_W}
    Let $G$ be a locally finite connected graph, $\Gamma$ be a closed nonunimodular subgroup of $\Aut(G)$ that acts transitively on $G$, and $\w_\Gamma$ as in \eqref{def:haarweights}.  Then 
    \begin{enumerate}
        \item\label{thm:local_lim_W_small} as $\beta\searrow 0$ the local limit of $\mathfrak{W}_\beta(G,\mvU)$ exists and a.s.\ satisfies
        \[
        \wmsf(G,\mvU)\subsetneq  \mathfrak{W}_{0+}(G,\mvU)\subseteq\fmsf(G,\mvU).
        \]
        \item\label{thm:local_lim_W_large} as $\beta\to \infty$ the local limit of $\mathfrak{W}_\beta(G,\mvU)$ exists and is given by $\wmax(G,\mvU)$.
    \end{enumerate}
\end{thm}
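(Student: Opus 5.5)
Both parts rest on one local criterion. Since $X_{e,\beta}$ is continuous in $\beta$, for a fixed edge $e$ the event $e\notin\mathfrak{W}_\beta$ says that $e$ is the $\prec_\beta$-least edge on some finite cycle or on some bi-infinite simple path through $e$. The plan is to identify, for each fixed $e$, the a.s.\ behaviour of this event as $\beta\searrow0$ and as $\beta\to\infty$. Local convergence then follows because $E$ is countable.

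\textbf{Large $\beta$.} I will show that a.s., for every $e$, the event $\{e\in\mathfrak{W}_\beta\}$ eventually equals $\{e\in\wmax\}$.
\begin{itemize}
\item \emph{Finite cycles.} $e$ is least on some cycle in $\prec_\beta$ exactly when some other edge $e'$ is $\prec_\beta$-larger on every path closing it. On any finite edge set, $\prec_\beta$ agrees with $\prec_\w$ for $\beta$ large (Lemma~\ref{order_alignment}). The comparisons that matter are those between $e$ and the edges $e'$ of a cycle.
\item \emph{Reduction to finitely many competitors.} If $\w(e')>\w(e)$, then $X_{e',\beta}>X_{e,\beta}$ as soon as $\beta$ is large compared with $\log\frac{1-U_e}{1-U_{e'}}\big/\log\frac{\w(e')}{\w(e)}$. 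This is not uniform over infinitely many $e'$. However, the weight ratios take values in a discrete multiplicative set (transitivity gives finitely many ratios per step), so $\log(\w(e')/\w(e))\ge c>0$ whenever it is positive. The bad edges are then those with $1-U_{e'}<(1-U_e)e^{-c\beta}$.
\item \emph{Edges of equal weight.} These reduce to the $U$-comparison, so $\prec_\beta$ and $\prec_\w$ agree on them.
\item \emph{Bi-infinite paths.} Use the standard characterization: $e=(x,y)\notin\wmax$ iff, in the configuration of edges $\succ_\w e$, the endpoints $x$ and $y$ are connected or both lie in infinite clusters. Here "infinite" should be checked carefully for $\wmax$, versus heavy.
\item \emph{Monotone approximation.} The $\prec_\beta$-analogue is the same statement with $\succ_\beta$. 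The configuration $\{e':e'\succ_\beta e\}$ increases, up to finitely many local changes, toward $\{e'\succ_\w e\}$. A monotone-limit argument on connectivity events should then work, using that heavy and light behaviour at large weight separation are handled by the uniform bound above.
\end{itemize}

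\textbf{Small $\beta$.} Now $X_{e,\beta}\to 1-U_e$, so on every finite edge set $\prec_\beta$ eventually agrees with $\prec_0$. For finite cycles this gives $e\notin\fmsf\Rightarrow e\notin\mathfrak{W}_\beta$ for small $\beta$, and conversely. For the wired part, define $\mathfrak{W}_{0+}$ by the criterion: $e=(x,y)$ is deleted iff $e\notin\fmsf$, or for every $\beta>0$ small there is a bi-infinite path on which $e$ is $\prec_\beta$-least. I need to show this stabilises. The monotonicity I expect is the following: as $\beta$ decreases, each fixed edge's rank relative to edges of larger weight weakens. Formally, consider the set $\{e':X_{e',\beta}>X_{e,\beta}\}$ and use that $\w(e')^\beta/\w(e)^\beta$ is monotone in $\beta$ for each $e'$. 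Edges heavier than $e$ drop out, and lighter ones enter, as $\beta\searrow0$. This is not monotone overall, and I expect this to be the \textbf{main obstacle}. I would handle it with a.s.\ finiteness: only finitely many edges near $x,y$ change relative order with $e$ in any interval $[\beta_1,\beta_2]$ with $\beta_1>0$. Beyond that, I would argue via percolation at level $p=U_e$ on the weighted labels. A bi-infinite path survives as $\beta\to0$ only along directions where weights go to $\infty$ slowly relative to $\beta\log\w$. This suggests that the limit deletes exactly those $e$ whose endpoints lie in distinct \emph{heavy} clusters of $\{U_{e'}<U_e\}$, roughly.

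\textbf{The inclusions.} With that description, the inclusions follow directly. $\mathfrak{W}_{0+}\subseteq\fmsf$ holds because finite-cycle deletions are preserved. $\wmsf\subseteq\mathfrak{W}_{0+}$ holds because heavy clusters are infinite. For strictness, I would use nonunimodularity: there are $p$ with infinite light clusters, since $p_c<p_h$ on nonunimodular transitive graphs (Hutchcroft). With positive probability some edge $e$ then joins two infinite clusters of $\{U<U_e\}$, at least one of them light. Such an $e$ is deleted by $\wmsf$ but kept in $\mathfrak{W}_{0+}$.
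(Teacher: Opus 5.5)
Part (1) is not actually proved. You guess the right limiting criterion: $x,y$ lie in distinct $G[U_e]$-clusters, at least one of them $\w$-light. But the existence and identification of the limit is exactly what you call the main obstacle, and you leave it at ``suggests\dots roughly''.

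Your finite-cycle claim ``$e\notin\fmsf\Rightarrow e\notin\mathfrak{W}_\beta$ for small $\beta$, and conversely'' is also false in the converse direction. The converse would say that $e\in\fmsf$ implies $e\in\mathfrak{F}_\beta$ for small $\beta$, i.e.\ $\mathfrak{F}_\beta\to\fmsf$. But for the Cartesian product of $\mathrm{GP}(2)$ and $T_d$, $d\ge18$, the paper gets $\mathfrak{F}_\beta\to\mathfrak{W}_{0+}\neq\fmsf$ (Observation~\ref{prop:prod_small_beta}). Alignment on a finite set is not uniform over the infinitely many cycles through $e$: for every $\beta>0$, edges of huge weight beat $e$ even when $U_{e'}>U_e$.

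What you need is the cone criterion: $e=(x,y)$ is in the wired forest iff $x,y$ lie in distinct components of $\mathrm{Cone}(e,\prec)$, at least one of them finite (Lemma~\ref{lem:wired-characterization}). On top of it you need two arguments that are absent from your sketch.
\begin{itemize}
\item \emph{Light case.} If $C_{U_e}(x)$ is light, only finitely many of its boundary edges are heavier than $e$. Every other boundary edge has $U_{e'}>U_e$ and $\w(e')\le\w(e)$, hence is $\prec_\beta e$ for all $\beta>0$. So for small $\beta$ the $x$-component of $\mathrm{Cone}(e,\prec_\beta)$ stays inside $C_{U_e}(x)$. Its vertices have weight $>\w(e)(1-U_e)^{1/\beta}$, so it is finite.
\item \emph{Heavy case.} If both clusters are heavy, pick $q\in(p_h,U_e)$. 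Each cluster contains an infinite heavy $q$-cluster (H\"aggstr\"om), hence an infinite connected subgraph within finitely many weight levels (Tim\'ar). On that subgraph $\varphi_{e,e'}(\beta)\ge c^\beta\frac{1-q}{1-U_e}>1$ uniformly for small $\beta$, so the cone component is infinite.
\end{itemize}
Strictness also needs an ergodicity step to pass from positive probability to almost surely.

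In part (2), your discreteness premise is false in general. Finitely many ratios per step generate a finitely generated, but typically dense, subgroup of $\bR^+$. For the Cartesian product of $T_{1,2}$ and $T_{1,3}$, with $\Gamma$ the product of the orientation-preserving groups, the weights are $2^a3^b$. This is precisely why Theorem~\ref{thm:local_lim_F_large_discr} assumes $\w\in\gl^{\bZ}$.

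So $\mathrm{Cone}(e,\prec_\beta)$ does not approach $\mathrm{Cone}(e,\prec_\w)$ ``up to finitely many local changes''. Edges lighter than $e$ leave only gradually, and infinitely many of them can be present at every $\beta$. Even granting discreteness, a uniform per-edge failure probability $(1-U_e)e^{-c\beta}$ is not summable over an infinite component. It therefore cannot show that an infinite component of $\mathrm{Cone}(e,\prec_\w)$ survives; your monotone-limit step is unsupported.

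The paper avoids any global monotonicity:
\begin{itemize}
\item If $e\in\wmax$, the finite $x$-component of $\mathrm{Cone}(e,\prec_\w)$ is sealed off by finitely many boundary edges, all $\prec_\w e$. By Lemma~\ref{order_alignment} they are $\prec_\beta e$ for large $\beta$, and lighter edges elsewhere are irrelevant.
\item If both components are infinite and not entirely at weight $\w(e)$ (that case is decided by $U$ alone), transitivity gives an ascending ray with weights $r^n$ inside the component. Borel--Cantelli applied to $\{1-U_{e_n}\le r^{-n/2}\}$ (Lemma~\ref{lem:ascending_ray_alignment}) puts the whole ray into $\mathrm{Cone}(e,\prec_\beta)$ for all large $\beta$. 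This supplies exactly the summability your argument lacks.
\end{itemize}
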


In the case of the free version, the situation turns out to be more intricate. It is easy to see that $\mathfrak{F}_\beta$ for both $\beta\searrow 0$ and $\beta\to \infty$ admits subsequences that converge weakly. Moreover, the proof of the next theorem will show that these subsequential limits must be 'sandwiched' by $\mathfrak{W}_{0+}$ and $\fmsf$, and respectively, by $\wmax$ and $\fmax$. However, it is not clear whether the limits exist in the generality of Theorem~\ref{thm:local_lim_W}. In the next two theorems, we prove the local limits of $\mathfrak{F}_\beta$ under additional assumptions.

\begin{thm}[Local limit for $\mathfrak{F}_\beta$]\label{thm:local_lim_F}
    Let $G$ be a locally finite connected graph, $\Gamma$ be a closed nonunimodular subgroup of $\Aut(G)$ that acts transitively on $G$, and $\w_\Gamma$ as in \eqref{def:haarweights}. 
    \begin{enumerate}
        \item If $\mathfrak{W}_{0+}(G,\mvU)=\fmsf(G,\mvU)$ then as $\beta\searrow 0$ the local limit of $\mathfrak{F}_\beta(G,\mvU)$ exists and is given by $\fmsf(G,\mvU)$.
        \item If $\wmax(G,\mvU)=\fmax(G,\mvU)$ then as $\beta\to \infty$ the local limit of $\mathfrak{F}_\beta(G,\mvU)$ exists and is given by $\fmax(G,\mvU)$.
    \end{enumerate}
\end{thm}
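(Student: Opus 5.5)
The strategy is a sandwich argument. For every $\beta$, a.s.\ we have $\mathfrak{W}_\beta\subseteq\mathfrak{F}_\beta$, so it suffices to produce a matching upper bound for $\mathfrak{F}_\beta$ in each regime. Together with the lower bound and the hypothesis, it forces the limit.

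Fix a finite set of edges $K$. We must show that a.s., for all $\beta$ small enough (resp.\ large enough), $\mathfrak{F}_\beta\cap K=\fmsf\cap K$ (resp.\ $=\fmax\cap K$).

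\emph{Lower bound.} By Theorem~\ref{thm:local_lim_W}, a.s.\ for $\beta$ small enough $\mathfrak{W}_\beta\cap K=\mathfrak{W}_{0+}\cap K$. Under the hypothesis of (1) this equals $\fmsf\cap K$. Hence every edge of $\fmsf\cap K$ lies in $\mathfrak{W}_\beta\subseteq\mathfrak{F}_\beta$ eventually. Similarly, in case (2), every edge of $\wmax\cap K=\fmax\cap K$ lies in $\mathfrak{F}_\beta$ for all large $\beta$.

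\emph{Upper bound.} This uses Lemma~\ref{order_alignment}: on any finite edge set the order $\prec_\beta$ agrees with $\prec_0$ for all $\beta$ sufficiently small, and with $\prec_\w$ for all $\beta$ sufficiently large. Let $e\in K\setminus\fmsf$. Then $e$ is the $\prec_0$-least edge of some finite cycle $C_e$. Applying the lemma to the finite set $\bigcup_{e\in K\setminus\fmsf}C_e$, we get that for all small $\beta$ each such $e$ is also $\prec_\beta$-least on $C_e$, hence $e\notin\mathfrak{F}_\beta$. This gives $\mathfrak{F}_\beta\cap K\subseteq\fmsf\cap K$ eventually. The same argument with $\prec_\w$ and $\fmax$ gives $\mathfrak{F}_\beta\cap K\subseteq\fmax\cap K$ for all large $\beta$.

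Combining the two bounds, the two sides agree on $K$ for all $\beta$ beyond a random threshold. Since $E$ is countable, taking a countable union over finite $K$ (or just single edges) yields a.s.\ local convergence.

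The only delicate point is the alignment lemma in the regime $\beta\to\infty$ when weights tie. For edges with $\w(e)=\w(e')$, the factor $\w^\beta$ cancels, so the comparison reduces to the $U$-labels, exactly as in the tie-breaker of $\prec_\w$. For edges with $\w(e)<\w(e')$, we need $\w(e)^\beta(1-U_e)<\w(e')^\beta(1-U_{e'})$ for large $\beta$. This holds a.s.\ because $1-U_{e'}>0$ a.s.; this finitely many-edge check is presumably the content of Lemma~\ref{order_alignment}, which I take as given. Everything else is routine.
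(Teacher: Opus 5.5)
Your proposal is correct and matches the paper's proof. The lower bound comes from $\mathfrak{W}_\beta\subseteq\mathfrak{F}_\beta$, the local convergence of $\mathfrak{W}_\beta$ (Theorem~\ref{thm:local_lim_W}) and the hypothesis. The upper bound is Lemma~\ref{F_beta_exclusion}, which applies Lemma~\ref{order_alignment} to a finite path joining the endpoints of $e$ in $\mathrm{Cone}(e,\prec_0)$ (resp.\ $\mathrm{Cone}(e,\prec_{\w})$); this is equivalent to your finite cycle on which $e$ is least.
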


\begin{thm}\label{thm:local_lim_F_large_discr}
    Let $G=(V,E)$ be a connected locally finite graph and $\w:V\to\bR^+$ be a weight function taking values in $\gl^{\bZ}$ for some $\gl>1$. Then as $\beta\to \infty$ the local limit of $\mathfrak{F}_\beta(G,\mvU)$ exists and is given by $\fmax(G,\mvU)$.
\end{thm}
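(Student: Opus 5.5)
The plan is to compare, edge by edge, the cycle-deletion criteria defining $\mathfrak{F}_\beta$ and $\fmax$. We show that for a.e.\ $\mvU$ and every fixed edge $e$ there is a random threshold $\beta_0(e)<\infty$ such that for all $\beta\ge\beta_0(e)$ we have $e\in\mathfrak{F}_\beta$ if and only if $e\in\fmax$. Local convergence then follows, since on any finite set of edges we can take the maximum of finitely many thresholds. We work on the full-measure event where all labels lie in $(0,1)$ and the orders are linear.

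First, suppose $e\notin\fmax$. Then there is a finite cycle $C\ni e$ on which $e$ is $\prec_\w$-least. Since $C$ has finitely many edges, Lemma~\ref{order_alignment} (or a direct check) says that for all sufficiently large $\beta$ the order $\prec_\beta$ agrees with $\prec_\w$ on $C$. The check is easy in both cases of the comparison of $e$ with another edge $f\in C$:
\begin{itemize}
\item if $\w(e)<\w(f)$, then $X_{e,\beta}/X_{f,\beta}=(\w(e)/\w(f))^\beta(1-U_e)/(1-U_f)\to0$ as $\beta\to\infty$;
\item if $\w(e)=\w(f)$, then comparing $X_{e,\beta}$ with $X_{f,\beta}$ is the same as comparing $1-U_e$ with $1-U_f$, for every $\beta$.
\end{itemize}
Hence $e$ is $\prec_\beta$-least on $C$, so $e\notin\mathfrak{F}_\beta$, for all $\beta$ beyond a threshold determined by the single cycle $C$.

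The main obstacle is the converse. If $e\in\fmax$, we must exclude infinitely many cycles simultaneously, and a naive threshold could depend on the cycle. This is where the discreteness $\w(V)\subseteq\gl^{\bZ}$ enters; note that edge weights $\min\{\w(x),\w(y)\}$ also lie in $\gl^{\bZ}$. Set
\[
\beta_0(e):=\frac{\log\bigl(1/(1-U_e)\bigr)}{\log\gl},
\]
which is finite a.s. Take any finite cycle $C\ni e$. Since $e\in\fmax$, some $f\in C$ satisfies $f\prec_\w e$. We check that $f\prec_\beta e$ for every $\beta>\beta_0(e)$ in the two possible cases:
\begin{itemize}
\item if $\w(f)=\w(e)$ and $1-U_f<1-U_e$, then $X_{f,\beta}<X_{e,\beta}$ for every $\beta$;
\item if $\w(f)<\w(e)$, then discreteness forces $\w(f)\le\w(e)/\gl$, so $X_{f,\beta}\le \w(e)^\beta\gl^{-\beta}<\w(e)^\beta(1-U_e)=X_{e,\beta}$ whenever $\gl^{-\beta}<1-U_e$, that is, whenever $\beta>\beta_0(e)$.
\end{itemize}
The bound $\beta_0(e)$ does not depend on $C$. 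Therefore, for $\beta>\beta_0(e)$, $e$ is not least on any finite cycle, and $e\in\mathfrak{F}_\beta$.

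Combining the two directions, for each edge $e$ the indicator of $\{e\in\mathfrak{F}_\beta\}$ is eventually constant in $\beta$ and equal to that of $\{e\in\fmax\}$. The threshold in the first case depends on the witnessing cycle, but once one such cycle is fixed it is a finite number. Given a finite set of edges, taking the maximum of the thresholds yields a.s.\ eventual agreement of $\mathfrak{F}_\beta$ with $\fmax$ on that set. This is exactly local convergence, which proves Theorem~\ref{thm:local_lim_F_large_discr}.
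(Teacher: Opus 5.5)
Your proposal is correct and follows essentially the same route as the paper. There, the case $e\notin\fmax$ is handled by Lemma~\ref{F_beta_exclusion} via order alignment on a finite witnessing path. The case $e\in\fmax$ uses the same uniform threshold $\gl^{-\beta}<1-U_e$ to show that every $e'\prec_{\w}e$ satisfies $e'\prec_\beta e$, i.e.\ $\mathrm{Cone}(e,\prec_\beta)\subseteq\mathrm{Cone}(e,\prec_{\w})$; your cycle-by-cycle argument is this cone inclusion combined with Lemma~\ref{lem:free-characterization}.
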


\begin{ques}\label{ques:local_lim_F_large}
    In the setting of Theorem~\ref{thm:local_lim_W}, as $\beta\to \infty$, does the local limit of $\mathfrak{F}_\beta(G,\mvU)$ exist and is it always equal to $\fmax(G,\mvU)$ a.s.?
\end{ques}

Theorems~\ref{thm:local_lim_F} and~\ref{thm:local_lim_F_large_discr} might lead one to believe that establishing the limit as  $\beta\searrow0$ is merely a technical challenge, and $\mathfrak{F}_{0+}$ also should always exist and be given by $\fmsf$. 
However, our next observation shows the situation is more subtle.

\begin{obs}\label{prop:prod_small_beta}\hfill
\begin{enumerate}
    \item\label{prop:prod_small_beta_cart_prod} Let $G_1$ be the Cartesian product of the grandparent graph $\mathrm{GP}(2)$ and the $d$-regular tree $T_{d}$, with $d\ge18$. As $\beta\searrow 0$ the local limit of $\mathfrak{F}_\beta(G_1,\mvU)$ exists and is given by $\mathfrak{W}_{0+}(G_1,\mvU)$ a.s. 
    \item\label{prop:prod_small_beta_T24} Let $G_2=T_{2,4}$ as in Example~\ref{ex:nonuni-tree}, then as $\beta\searrow 0$ the local limit of $\mathfrak{F}_\beta(G_2,\mvU)$ exists and is also given by $\fmsf(G_2,\mvU)$.
\end{enumerate}
        Moreover, for both $i\in\{1,2\}$ we have $\mathfrak{W}_{0+}(G_i,\mvU)\neq\fmsf(G_i,\mvU)$ and the weight functions take values in $2^\bZ$.
\end{obs}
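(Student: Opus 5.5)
We handle the two graphs separately. In both cases we first identify the small-$\beta$ structure of the order $\prec_\beta$. Since the weights lie in $2^{\bZ}$, we have $X_{e,\beta}=2^{\beta k(e)}(1-U_e)$ with $k(e)\in\bZ$. For $e,e'$ with $\w(e)>\w(e')$, the comparison $e\prec_\beta e'$ fails unless $1-U_e<2^{-\beta(k(e)-k(e'))}(1-U_{e'})$, and this differs from $\prec_0$ only on an event of probability $O(\beta)$ for each fixed pair. By Lemma~\ref{order_alignment}, on any finite edge set $\prec_\beta$ agrees with $\prec_0$ for all small $\beta$. An edge $e$ lies in $\mathfrak{F}_\beta$ unless it is $\prec_\beta$-least on some finite cycle. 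Deciding this requires examining unboundedly long cycles through $e$, and this is where the limit can deviate from $\fmsf$.

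The general reduction is as follows. By the sandwich mentioned after Theorem~\ref{thm:local_lim_F}, every subsequential limit lies between $\mathfrak{W}_{0+}$ and $\fmsf$. For an edge $e=(x,y)$ in $\fmsf\setminus\mathfrak{W}_{0+}$ (fixed $\mvU$), $e$ is $\prec_0$-least on a bi-infinite path $\pi$ but on no cycle. The question is whether, for all small $\beta$, there is a long cycle through $e$ on which $e$ is $\prec_\beta$-least. Note that $\prec_0$-invasion from $x$ and from $y$ (the invasion trees) meet only at $\infty$. For $\beta>0$, edges far along heavy directions get multiplied by $2^{\beta k}$ with $k$ large, so they become $\prec_\beta$-large. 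This effectively lets the two invasion clusters connect through regions where the labels $U$ are close to $1$ but the weights are huge.

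For $G_1=\mathrm{GP}(2)\times T_d$, the plan is to show that almost surely, for every $e\in\fmsf\setminus\mathfrak{W}_{0+}$ and every small $\beta$, such a cycle exists. The two sides of $e$ each contain a heavy $\prec_0$-invasion path. One follows the ancestor direction of $\mathrm{GP}(2)$, where the weight grows like $2^n$. There the grandparent edges make the graph one-ended in the heavy direction, so two heavy paths can be joined by an edge of weight $2^{n}$ whose $X$-value exceeds $X_e$ once $2^{\beta n}(1-U)>X_e$. We choose $n\gg 1/\beta$. The tree factor $T_d$ with $d\ge18$ is needed so that $p_c<p_h<p_u$ gaps ensure both sides of $e$ reach heavy clusters. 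This shows that every $\mathfrak{F}_\beta$, for small $\beta$, deletes $e$. Combined with the sandwich, the limit is $\mathfrak{W}_{0+}$.

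For $G_2=T_{2,4}$, the graph is a tree, so there are no cycles. Then $\mathfrak{F}_\beta=G_2=\fmsf$ for all $\beta$, and the limit is trivial. It remains to check $\mathfrak{W}_{0+}\ne\fmsf$ in both examples. By Theorem~\ref{thm:ph_vs_pu}, this amounts to $p_h<p_u$. On $T_{2,4}$ we have $p_u=1$ and $p_h<1$. On $G_1$ we use the known estimates: $p_h$ is controlled by $\mathrm{GP}(2)$ and $p_u$ is close to $1$ via the tree factor, and $d\ge18$ is exactly what makes $p_h<p_u$ hold. The main obstacle is the $G_1$ cycle construction, specifically making it hold simultaneously for all small $\beta$. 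We handle this using monotonicity in $\beta$ of the relevant $X$-comparisons along heavy directions, together with a Borel--Cantelli argument over $n$.
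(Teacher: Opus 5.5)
\textbf{There is a genuine gap in part~(1).} Your treatment of $G_2$ matches the paper. So does reducing $\mathfrak{W}_{0+}\neq\fmsf$ to $p_h<p_u$ via Theorem~\ref{thm:ph_vs_pu}, though you should justify $p_h(T_{2,4})\le 1/2$ via the binary up-tree. For $G_1$, the paper does not build cycles. It cites Theorem~\ref{thm:prod} ($\mathfrak{F}_\beta(G_1)=\mathfrak{W}_\beta(G_1)$ for $\beta>0$, valid for all $d\ge2$) and then Theorem~\ref{thm:local_lim_W}.

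Your direct route breaks at its central step. For $e=(x,y)\in\fmsf\setminus\mathfrak{W}_{0+}$ you only know that $C_{U_e}(x)\neq C_{U_e}(y)$ are both heavy. The proof of Proposition~\ref{prop:local_lim_W_small} then shows that the components of $x$ and $y$ in $\mathrm{Cone}(e,\prec_\beta)$ are infinite, because they contain infinite subgraphs confined to \emph{finitely many levels}.
\begin{itemize}
\item Nothing says these components contain an ancestor-direction path, so ``one follows the ancestor direction of $\mathrm{GP}(2)$'' is unjustified.
\item ``Joining two heavy paths by an edge of weight $2^n$'' presupposes that both cone components already reach level $n$.
\item A Borel--Cantelli argument along a fixed upward ray only shows that its tail lies in the cone. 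Since upward paths merge, all such tails form one component $C_{\mathrm{tail}}(\beta)$. It does not show that the random infinite component of $x$ ever attaches to one of them.
\end{itemize}

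\textbf{The missing idea.} The paper supplies it in the proof of Theorem~\ref{thm:prod}. Explore finite sets $S_n$ that exhaust $\{v:\w^x(v)\ge a\}$ and are closed under passing to the parent. Whenever the cone component of $x$ exits $S_n$, the upward ray from the exit vertex is unexplored. Conditionally, it lies entirely in the cone with probability at least some $c(U_e,\beta)>0$. Martingale convergence then gives that a.s.\ every infinite component of the cone equals $C_{\mathrm{tail}}(\beta)$, which yields $x\leftrightarrow y$ in the cone.

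\textbf{Uniformity in $\beta$.} Your proposed fix runs the wrong way. For $\w(e')>\w(e)$ the ratio $\varphi_{e,e'}(\beta)$ increases with $\beta$, so such edges with $U_{e'}>U_e$ \emph{leave} the cone as $\beta\searrow0$. In fact no single finite cycle can serve all small $\beta$. By continuity at $\beta=0$ it would lie in $\mathrm{Cone}(e,\prec_0)=G[U_e]$, contradicting $e\in\fmsf$. The connections must escape to levels $n\gtrsim1/\beta$, so you must control a continuum of $\beta$-dependent paths. One workable device: for $\beta\in[\beta_1,\beta_2]$ the cone contains $\{e':\w(e')\ge\w(e),\ e\prec_{\beta_1}e'\}\cup\{e':\w(e')<\w(e),\ e\prec_{\beta_2}e'\}$. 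Run the unique-infinite-component argument for this set and intersect over dyadic intervals. The paper's route tacitly needs the same uniformity, since Theorem~\ref{thm:prod} is proved for each fixed $\beta$.

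\textbf{The estimate $p_h<p_u$ on $G_1$.} Your account is wrong. The paper uses $p_h\le p_c(T_d)=1/(d-1)$, since a single $T_d$-fiber already produces heavy clusters; so the tree factor, not $\mathrm{GP}(2)$, controls $p_h$. It uses $p_u\ge 1/(2\sqrt{d-1}+8)$ from the spectral radius, which is far from $1$. The condition $d\ge18$ is exactly where $1/(d-1)<1/(2\sqrt{d-1}+8)$. It plays no role in deleting $e$; it is only needed to make $\fmsf\setminus\mathfrak{W}_{0+}$ nonempty.
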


We briefly comment on choices of the graphs in Observation~\ref{prop:prod_small_beta}. In case of $G_1$ we chose $d\ge18$ based on our computation in \eqref{T_d_ph_pu} that is needed to conclude that $\mathfrak{W}_{0+}(G_1,\mvU)\neq\fmsf(G_1,\mvU)$. We did not attempt to improve the value, although we believe that the statement is true for any $d\ge3$. 
The choice of $T_{2,4}$ was made only to ensure that the induced weight functions take values in $2^\bZ$ (so that the example satisfies assumptions in Theorem~\ref{thm:local_lim_F_large_discr}) and so that each vertex has at least $2$ outgoing edges allowing us to invoke Theorem~\ref{thm:ph_vs_pu}.

%%%%%%%%%%%%%%%%%%%%%%%%%%%%%
\subsection{Characterization of the heavy-cluster nonuniqueness phase}
%%%%%%%%%%%%%%%%%%%%%%%%%%%%%

One of the most studied features of Bernoulli$(p)$ percolation on quasi-transitive graphs are the phase transitions at which an infinite cluster emerges and then becomes unique. In the presence of the relative weight function as in \eqref{def:haarweights}, there is another phase transition at which a heavy cluster appears. The critical thresholds for the respective phase transitions are defined as follows:
\begin{align*}
p_c(G)&:=\inf\{p \in [0,1] : \mathbb{P}_p(\text{there is an infinite cluster})=1\},\\
p_h(G,\Gamma)&:=\inf\{p\in[0,1]: \mathbb{P}_p(\text{there is a $\w_\Gamma$-heavy cluster})=1\},\\
p_u(G)&:=\inf\{p \in [0,1] : \mathbb{P}_p(\text{there is exactly one infinite cluster})=1\}. %\label{eq:pu}
\end{align*}
Recall that, by \cite[Theorem 4.1.6]{Haggstrom99}, in Bernoulli$(p)$ percolation on quasi-transitive graphs, light-infinite and heavy clusters cannot coexist, yielding the following picture (see Figure~\ref{Fig:phases}). 
\begin{figure}[htb]
\begin{center}
\begin{tikzpicture}[thick, scale=0.7]

\draw [-] (-12,0) to (11,0);

\draw (-12,0.3) to (-12,-0.3);
\draw (-12,-0.3) node[below] {$0$};
\draw (-7.5,0.3) to (-7.5,-0.3);
\draw (-7.5,-0.3) node[below] {$p_c(G)$};
\draw (-0.5,0.3) to (-0.5,-0.3);
\draw (-0.5,-0.3)node[below] {$p_h(G,\Gamma)$};
\draw (6.5,0.3) to (6.5,-0.3);
\draw (6.5,-0.3) node[below] {$p_u(G)$};
\draw (11,0.3) to (11,-0.3);
\draw (11,-0.3) node[below] {$1$};

\draw (-10,1) node[above] {There are only};
\draw (-10,0.3) node[above] {finite clusters};
\draw (-4,1) node[above] {$\exists$ $\infty$-many infinite clusters};
\draw (-4,0.3) node[above] {all of which are $\w_\Gamma$-light};
\draw (3,1) node[above] {$\exists$ $\infty$-many infinite clusters};
\draw (3,0.3) node[above] {all of which are $\w_\Gamma$-heavy};
\draw (8.8,1) node[above] {$\exists\,!$ infinite cluster};
\draw (8.8,0.3) node[above] {and it is $\w_\Gamma$-heavy};
\end{tikzpicture}
\end{center}
\caption{The four possible phases of Bernoulli$(p)$ percolation, distinguished by the number and weight of infinite clusters.}
\label{Fig:phases}
\end{figure}

Characterizing the non-triviality of the phase with infinitely many infinite clusters $(p_c(G),p_u(G))$ has been one of the central questions in the field. Benjamini and Schramm conjectured that for a quasi-transitive graph $G$, this phase is not empty if and only if $G$ is nonamenable \cite[Conjecture 6]{BSbeyond}. Recall that we say that the graph is amenable if the Cheeger constant $\Phi_V(G)=0$, where
\begin{equation*}
    \Phi_V(G):=\inf_{K\subseteq V\atop \abs{K}<\infty}\frac{\abs{\partial_V K}}{\abs{K}}.
\end{equation*}
Notably, by the Soardi--Woess--Salvatori theorem \cite{Soardi-Woess,Salvatori}, for unimodular quasi-transitive graphs this definition coincides with the amenability of the group $\Aut(G)$.

The forward direction of \cite[Conjecture 6]{BSbeyond} was established in \cite{pc=pu}; however, the other direction is still only known in particular cases \cite{BSbeyond,PSN00,Schonmann01,Nachmias12,ThomPSN,Hutchcroft20}.

A celebrated result of  Hutchcroft \cite{Hutchcroft20} gives that whenever $\Gamma$ is nonunimodular $p_c(G)<p_h(G,\Gamma)$. On the other hand, understanding when $p_h(G,\Gamma)<p_u(G)$ remains open and is largely parallel to the question of when $p_c(G)<p_u(G)$. While nonunimodular graphs are necessarily nonamenable, the relevant geometric condition remains to be the amenability of the group $\Gamma$ or, equivalently, the weighted-amenability of the graph. We briefly recall the definition here and refer the reader to \cite{TT25} for further discussion. We say that $G$ is weighted-amenable (or $\w_\Gamma$-amenable) if 
\begin{equation*}%\label{eq:wCheeger}
    \Phi^{\w_\Gamma}_V(G):=\inf_{F\subseteq V\atop \abs{F}<\infty}\frac{\w_\Gamma^o(\partial_V F)}{\w_\Gamma^o(F)}=0.
\end{equation*}
By \cite[Theorem~3.9]{BLPS99inv}, which generalizes the Soardi--Woess--Salvatori theorem to the setting of all quasi-transitive graphs, $\w_\Gamma$-amenability of a quasi-transitive graph is equivalent to the amenability of the group $\Gamma$.
Finally, in \cite[Conjecture 6.2]{TT25}, Terlov and Tim\'ar conjectured that for a quasi-transitive graph $G$, $p_h(G,\Gamma)<p_u(G)$ if and only if $G$ is $\w_\Gamma$-nonamenable, and showed the forward direction (analogous to the result of \cite{pc=pu}).

In \cite[Proposition 3.6]{LPS06msf}, Lyons, Peres, and Schramm established the equivalence of $p_c(G)=p_u(G)$ and $\fmsf(G,\mvU)=\wmsf(G,\mvU)$. Motivated by this statement and the parallels between the applications of $\fmsf$ on unimodular graphs and $\fmax$ on nonunimodular ones, Terlov and Tim\'ar asked whether an analogous equivalence holds for $\w$-maximal spanning forests.

\begin{ques}[{\cite[Question 7.4]{TT25}}]\label{ques:ph}
    Let $\Gamma$ be a closed subgroup of $\Aut(G)$ that acts transitively on $G$.  Is it true that $p_h(G,\Gamma)=p_u(G)$ if and only if $\fmax(G,\mvU)=\wmax(G,\mvU)$?
\end{ques}
Note that since amenability of $G$ implies that $p_c(G)=p_u(G)$, it also implies that $\fmsf(G,\mvU)=\wmsf(G,\mvU)$. The next proposition shows the weighted analog of this implication.

\begin{prop}\label{prop:amen_F=W}
     Let $G$ be a locally finite connected graph, $\Gamma$ be a closed nonunimodular subgroup of $\Aut(G)$ that acts transitively on $G$, and $\w_\Gamma$ as in \eqref{def:haarweights}. If $G$ is $\w$-amenable then, for every $\beta>0$,  $\mathfrak{F}_{\beta}(G,\mvU)=\mathfrak{W}_{\beta}(G,\mvU)$ and $\fmax(G,\mvU)=\wmax(G,\mvU)$ a.s.
\end{prop}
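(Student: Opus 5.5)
We follow the standard Lyons--Peres--Schramm argument for $\fmsf=\wmsf$ on amenable graphs, with the mass transport principle replaced by its weighted version (Theorem~\ref{thm:MTP}) and ordinary amenability replaced by $\w$-amenability. Write $\mathfrak{F}$ and $\mathfrak{W}$ for either $\mathfrak{F}_\beta,\mathfrak{W}_\beta$ (for fixed $\beta>0$) or $\fmax,\wmax$. Both pairs are built from a $\Gamma$-invariant edge order, so both are jointly $\Gamma$-invariant factors of $\mvU$, and $\mathfrak{W}\subseteq\mathfrak{F}$ holds deterministically. It therefore suffices to show that the expected degrees agree, weighted appropriately. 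Specifically, we want
\[
\mathbb{E}[\deg_{\mathfrak{F}}(o)]=\mathbb{E}[\deg_{\mathfrak{W}}(o)].
\]
Indeed, the difference $\mathfrak{F}\setminus\mathfrak{W}$ is a $\Gamma$-invariant edge set. If every vertex has zero expected degree in it, then it is empty a.s.

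\textbf{Step 1 (components of $\mathfrak{W}$ are light).} We would first show that in a $\w$-amenable transitive graph, every component of $\mathfrak{W}$ is infinite and carries at most two $\w$-nonvanishing ends. The cleaner route is to bound the weighted expected degree. Apply Theorem~\ref{thm:MTP} to invariant forests. For an invariant forest $\mathfrak{T}$ whose components are all infinite, the standard weighted MTP computation (as in \cite{CTT22,TT25}) gives that the weighted expected degree, i.e.\ $\mathbb{E}\sum_{v\sim o,\,(o,v)\in \mathfrak{T}} \text{(appropriate weight factor)}$, is $\ge 2$. Equality characterizes the situation where the components have at most one $\w$-nonvanishing end in a suitable sense. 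The natural way to set this up is to transport mass from each vertex to the heavier endpoints of its incident tree edges, or equivalently to measure weighted degree through $\w$-transports along edges.

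\textbf{Step 2 (upper bound from $\w$-amenability).} $\w$-amenability gives finite sets $F_n$ with $\w(\partial_V F_n)/\w(F_n)\to0$. Any forest restricted to $F_n$ has at most $|F_n|-1$ edges. Weighting each vertex by $\w$ and averaging over the F{\o}lner-type sets, via the standard ``weighted averaging'' lemma of \cite{BLPS99inv} (the nonunimodular analogue of the amenable degree bound), we get that every invariant forest has weighted expected degree $\le 2$. Hence $\mathfrak{F}$ has weighted expected degree at most $2$, while $\mathfrak{W}$ has weighted expected degree at least $2$ by Step 1. Since $\mathfrak{W}\subseteq\mathfrak{F}$, the two weighted degrees are equal. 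Because every term in the weighted degree has strictly positive weight, the invariant difference set $\mathfrak{F}\setminus\mathfrak{W}$ must have zero expected degree, and is therefore empty.

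\textbf{Main obstacle.} The delicate step is Step 1, establishing the lower bound $\ge 2$ for $\mathfrak{W}$. This needs every component of $\mathfrak{W}$ to be infinite, which follows because the wired forest has no finite components: each finite tree has an outgoing boundary edge, and its least such edge is not deleted. It also needs the right choice of weighted degree functional so that the weighted MTP yields exactly $2$ in the amenable case. We would first try the functional $\mathbb{E}\sum_{v:(o,v)\in\mathfrak{T}}\min(1,\w^o(v))$-type transports, but we expect some adjustment of the weighting may be needed. If that fails, a fallback is to run the LPS argument directly: show that for a.e.\ edge $e\in\mathfrak{F}\setminus\mathfrak{W}$ there is a bi-infinite path on which $e$ is least, yielding a heavy $\w$-nonamenable witness. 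Ruling this out under $\w$-amenability would then use \cite[Theorem 3.9]{BLPS99inv}.
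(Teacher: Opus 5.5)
\textbf{Step 1 has a genuine gap, so the degree sandwich does not close.}

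Your Step 2 is sound. Set $D(\mathfrak{T}):=\mathbb{E}\sum_{v:(o,v)\in\mathfrak{T}}\min(1,\w^o(v))$ and use $\w^o(u)\min(1,\w^u(v))=\w^o((u,v))$. Summing over a finite $F$ and orienting each finite subtree toward a root gives $\sum_{e\in\mathfrak{T}\cap E(F)}\w^o(e)\le\w^o(F)$. Hence $D(\mathfrak{T})\le 2$ for every invariant forest once $G$ is $\w$-amenable.

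The principle you invoke in Step 1, that all components infinite implies $D\ge 2$, is false. In $T_{1,k}$ (which is $\w$-amenable), let each vertex keep the edge to one uniformly chosen child. Every tree is an infinite descending ray, yet $D=\frac1k+\frac1k<2$.

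Upgrading to $\w$-nonvanishing trees does not rescue it. Suppose the trees have an end and $p$ is the parent map toward it. Apply the weighted MTP to $f(u,v)=\min(1,\w^u(v))$ for $v=p(u)$, and $0$ otherwise. This gives
\[
D(\mathfrak{T})=2\,\mathbb{E}\bigl[\min\bigl(1,\w^o(p(o))\bigr)\bigr]\le 2,
\]
with strict inequality as soon as the route to the end starts with a weight-decreasing step with positive probability. This can happen in nonvanishing trees: in $\mathrm{GP}(2)$ a tree may leave $x$ downward to a child $c$ and then climb via $(c,p(x))$. Nothing in your argument excludes this for $\mathfrak{W}$, and the skewed order, which lets a light edge with a good label beat a heavier one, certainly does not. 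So you only get $D(\mathfrak{W})\le D(\mathfrak{F})\le 2$, which forces nothing.

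Two smaller slips. The boundary edge of a finite tree that survives in $\mathfrak{W}$ is the $\prec$-\emph{largest} one, not the least. The trees of $\mathfrak{W}$ are nonvanishing, not light.

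\textbf{The missing idea is to argue with ends rather than degrees, which is what the paper does.}

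First, every tree of $\mathfrak{W}$ is $\w$-nonvanishing (Lemma~\ref{lem:W_nonvan}). A vanishing tree $T$ has vanishing boundary $\partial_E T$, hence a $\prec$-largest boundary edge $e=(x,y)$. The component of $x$ in $\mathrm{Cone}(e,\prec)$ stays inside $T$. Its edges have weight bounded below, by $X_{e,\beta}^{1/\beta}$ (resp.\ $\w(e)$), so this component is finite. Then $e\in\mathfrak{W}$ by Lemma~\ref{lem:wired-characterization}, a contradiction.

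Next, suppose $\mathfrak{F}\neq\mathfrak{W}$. The MTP shows that an $\mathfrak{F}$-tree containing one edge of $\mathfrak{F}\setminus\mathfrak{W}$ contains infinitely many. It therefore glues together infinitely many nonvanishing $\mathfrak{W}$-trees and has infinitely many $\w$-nonvanishing ends. By \cite[Theorem 5.3]{TT25}, $\mathfrak{F}$, and hence $G$, is not hyperfinite. This contradicts $\w$-amenability via \cite[Theorem 1.4]{TT25}.

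Your fallback points vaguely in this direction but supplies neither the nonvanishing lemma nor the step from ends to nonhyperfiniteness. \cite[Theorem 3.9]{BLPS99inv}, the equivalence of amenability of $\Gamma$ and $\w$-amenability, cannot by itself rule anything out.
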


Hence, it might be tempting to extend the equivalence in Question~\ref{ques:ph} to $\beta$-Skewed forests, namely that $p_h(G,\Gamma)=p_u(G)$ if and only if $\mathfrak{F}_\beta(G,\mvU)=\mathfrak{W}_\beta(G,\mvU)$. Our next result gives negative answers to both versions of the question.

\begin{thm}\label{thm:prod}
    Let $G$ be the Cartesian product of the grandparent graph $\mathrm{GP}(2)$ and the $d$-regular tree $T_d$, $d\ge2$, and $\Gamma=\Aut(G)$. Then, for every $\beta>0$, $\mathfrak{F}_\beta(G,\mvU)=\mathfrak{W}_\beta(G,\mvU)$ a.s.  
    
    In particular, $\fmax(G,\mvU)=\wmax(G,\mvU)$ a.s., while when $d\ge 18$, $p_h(G,\Gamma)<p_u(G)$.
\end{thm}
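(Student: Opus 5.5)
Since $\mathfrak{W}_\beta\subseteq\mathfrak{F}_\beta$ always, it suffices to show that a.s.\ no edge $e$ is the $\prec_\beta$-least edge on some bi-infinite simple path while not being least on any finite cycle. The plan is to exploit the product structure $G=\mathrm{GP}(2)\square T_d$.

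First compute the weights. The weight of a vertex $(x,t)$ depends only on $x$, namely $\w(x,t)=\w_{\mathrm{GP}}(x)=2^{-\mathrm{level}(x)}$ up to normalization, since $T_d$ is unimodular. Hence every $T_d$-edge $((x,t),(x,t'))$ has weight $\w(x)$. A $\mathrm{GP}$-edge $((x,t),(y,t))$ has weight $\min(\w(x),\w(y))$, attained at its lower endpoint.

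Next, the key structural fact is that for any bi-infinite simple path $\pi$ in $G$ and any edge $e\in\pi$, we want to replace $\pi$ by a finite cycle through $e$ all of whose other edges satisfy $X_{f,\beta}>X_{e,\beta}$, using the hyperfiniteness-like structure of $\mathrm{GP}(2)$. Take each end of $\pi$. Its projection to $\mathrm{GP}(2)$ either goes up (weights $\to\infty$) or stays in a bounded range of levels. I would argue that, given $e=(u,v)$ with $X_{e,\beta}=\w(e)^\beta(1-U_e)$, the set of edges $f$ with $X_{f,\beta}>X_{e,\beta}$ contains all edges of weight greater than $\w(e)\,(1-U_e)^{-1/\beta}$. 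In $\mathrm{GP}(2)\square T_d$, every vertex has an upward grandparent path. Vertices $(x,t)$ and $(y,s)$ can therefore be joined by climbing in the GP coordinate to a common ancestor level high enough, where all edges are heavy and hence automatically $\succ_\beta e$. Moving in the $T_d$ coordinate is done at that high level, since $T_d$-edges there are heavy as well. So from the two endpoints of $e$ I would follow $\pi$ a short finite distance, then climb vertically. Vertical climbing edges must be $\succ_\beta e$, but they have weight at least $\w$ of the current vertex, so the only risk is the first few levels near $e$. The fix is to use a finite portion of $\pi$ itself until reaching a vertex at which climbing works. This step is delicate when $\pi$ stays low, i.e.\ goes down in weight. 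If both ends of $\pi$ have vanishing weight, then every edge of $\pi$ far out has small weight. In that case I would instead use independence: almost surely the path $\pi$ must leave its start with edges of large label. A cleaner alternative is the known criterion of \cite{LPS06msf}-type: an edge $e$ lies in $\mathfrak{F}_\beta\setminus\mathfrak{W}_\beta$ iff in the $\beta$-percolation $\omega_e=\{f: f\succ_\beta e\}$ the endpoints of $e$ lie in distinct infinite clusters. So I reduce to showing that the inhomogeneous percolation $\{f:X_{f,\beta}>s\}$ has a.s.\ at most one infinite cluster for a.e.\ $s$, or, more precisely, that two infinite clusters adjacent via a single edge cannot occur.

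For that uniqueness statement, use the product structure. On each $\mathrm{GP}$-fiber, the percolation restricted to levels with $\w^\beta>s\cdot C$ has edges open with probability tending to $1$ as the level increases. High levels are thus essentially fully open, and any infinite cluster should reach arbitrarily high levels and merge there. I expect the main obstacle to be infinite clusters living in low levels, which are light and spread through the $T_d$ direction where weights are constant. Such clusters behave like Bernoulli percolation on $T_d$ copies with $p=\pr(w^\beta(1-U)>s)$ small. I would argue via insertion tolerance and the MTP (Theorem~\ref{thm:MTP}) that an infinite cluster confined below a fixed level cannot exist: at each vertex the upward GP edges are open with positive probability independently, so any infinite cluster a.s.\ contains an upward edge. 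Iterating upward via a Borel--Cantelli argument, it reaches every level and joins the high, near-complete region, which is connected. This gives uniqueness and hence $\mathfrak{F}_\beta=\mathfrak{W}_\beta$.

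Finally, the $\fmax=\wmax$ statement follows by the same argument with $\prec_\w$, or by taking $\beta\to\infty$ via Theorem~\ref{thm:local_lim_W} and Theorem~\ref{thm:local_lim_F}(2) once equality is shown directly. The inequality $p_h<p_u$ for $d\ge18$ comes from bounding $p_h\le$ a threshold from $\mathrm{GP}$-ancestral branching in product with $T_d$, and $p_u\ge 1/\sqrt{d-1}$-type bounds compared to $p_c(T_d)=1/(d-1)$.
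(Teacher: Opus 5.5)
\textbf{Verdict: there is a genuine gap.} Your reduction is correct and is the paper's. By Lemmas~\ref{lem:wired-characterization} and~\ref{lem:free-characterization}, $e\in\mathfrak{F}_\beta\setminus\mathfrak{W}_\beta$ exactly when the endpoints of $e$ lie in two distinct infinite components of $\mathrm{Cone}(e,\prec_\beta)$. Conditionally on $U_e$, this cone is an independent percolation in which an edge $f\neq e$ is absent with probability $\min\{1,X_{e,\beta}\,\w(f)^{-\beta}\}$. The gap is in the step that carries the proof: showing that all infinite components of this cone coincide. You propose to (i) rule out infinite clusters confined to finitely many levels, and (ii) let every cluster reaching high levels ``join the high, near-complete region''.

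For (i), the MTP is not available. Once the threshold $X_{e,\beta}$ is fixed, the cone is not $\Gamma$-invariant; it is only invariant under level-preserving automorphisms, which have infinitely many orbits. The heuristic ``upward edges are open independently with positive probability, so the cluster contains an upward edge; iterate by Borel--Cantelli'' also fails, because the cluster is built from the same labels. On the event that a cluster is infinite and stays below some level, the edges leaving it upward are closed by definition, so they cannot be treated as fresh trials. Moreover, one upward edge does not prevent the cluster from coming back down, so there is nothing to iterate.

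For (ii), there is no deterministic connected high region. Every level contains infinitely many edges outside the cone, so a cluster reaching arbitrarily high levels is not automatically attached to the others.

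The missing idea is the paper's exploration argument. Exhaust $\{v:\w(v)\ge a\}$ by finite sets $S_n$ (descendants of the $n$-th ancestor in $\mathrm{GP}(2)$ times $\mathrm{Ball}_{T_d}(b,n)$) whose complement is closed under taking upward rays, and reveal the labels on edges incident to $S_n$. If the cone-cluster of $x$ exits $S_n$ through a vertex $u$, the upward ray of $u$ is still unrevealed. It lies entirely in the cone with conditional probability at least $c=\prod_{n\ge0}\bigl(1-(1-U_e)a^{-\beta}2^{-n\beta}\bigr)>0$ (normalizing $\w(e)=1$), and on that event $x$ is attached to the common tail component $C_{\mathrm{tail}}$. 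Hence the conditional probability that $C(x)$ is infinite but differs from $C_{\mathrm{tail}}$ is at most $1-c$ for every $n$. L\'evy's $0$--$1$ law then makes this event null, which settles (i) and (ii) at once.

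Even the existence of $C_{\mathrm{tail}}$ needs an argument. Your first-paragraph claim that edges of weight above $\w(e)(1-U_e)^{-1/\beta}$ are automatically $\succ_\beta e$ is false. It holds for $\prec_\w$, but under $\prec_\beta$ the factor $1-U_f$ can be arbitrarily small; this is also why the cycle-surgery approach fails. The paper obtains $C_{\mathrm{tail}}$ by applying Borel--Cantelli twice:
\begin{enumerate}
\item Every upward ray eventually lies in the cone, because $\sum_n2^{-n\beta}<\infty$.
\item Any two upward tails are joined. Their $\mathrm{GP}(2)$-projections merge, and the $T_d$-coordinates are then linked at each higher level by a translate of one fixed finite path, whose failure probabilities are again summable.
\end{enumerate}

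Smaller points:
\begin{enumerate}
\item A cluster confined to finitely many levels is heavy, not light, since it has infinitely many vertices with weights bounded below. This is exactly what gives $p_h(G,\Gamma)\le p_c(T_d)=1/(d-1)$: an infinite cluster inside a single $T_d$-fibre is heavy.
\item For $p_u$ you need a concrete bound, such as $p_u\ge 1/\bigl((d+8)\rho(G)\bigr)$ together with $\rho(G)\le\bigl(d\rho(T_d)+8\bigr)/(d+8)$. This gives $p_u\ge1/(2\sqrt{d-1}+8)$, which exceeds $1/(d-1)$ precisely when $d\ge18$.
\item Theorem~\ref{thm:local_lim_F}(2) assumes $\wmax=\fmax$, so it cannot be used to derive it. Instead combine Theorem~\ref{thm:local_lim_W}(2) with Theorem~\ref{thm:local_lim_F_large_discr} (the weights lie in $2^{\bZ}$), or rerun the argument directly for $\prec_\w$.
\end{enumerate}
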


Although for $\beta>0$, the $\beta$-Skewed maximal forests fail to characterize the existence of a non-unique heavy cluster, the small-$\beta$ limit of the wired version still retains the relevant information about Bernoulli$(p)$ percolation, giving the following characterization.

\begin{thm}\label{thm:ph_vs_pu}
    Let $G$ be a locally finite connected graph, $\Gamma$ be a nonunimodular closed subgroup of $\Aut(G)$ that acts transitively on $G$, and $\w_\Gamma$ be as in \eqref{def:haarweights}. Then 
        \[
        \mathfrak{W}_{0+}(G,\mvU)=\fmsf(G,\mvU)\quad\mbox{a.s.}\qquad\mbox{if and only if}\qquad p_h(G,\Gamma)=p_u(G).
        \]
\end{thm}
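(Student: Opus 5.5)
The plan is to first give an explicit description of the local limit $\mathfrak{W}_{0+}$ in terms of Bernoulli percolation at level $U_e$. The characterization will then follow by the Lyons--Peres--Schramm argument behind \cite[Proposition 3.6]{LPS06msf}, with "infinite cluster" replaced by "heavy cluster".

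For $p\in[0,1]$ write $\omega_p:=\{e': 1-U_{e'}>1-p\}=\{e':U_{e'}<p\}$, which is Bernoulli$(p)$. Recall that $e=(x,y)\notin\fmsf$ iff $x$ and $y$ are joined in $\omega_{U_e}\setminus\{e\}$.

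\textbf{Step 1: description of $\mathfrak{W}_{0+}$.} I would show that a.s. $e=(x,y)\notin\mathfrak{W}_{0+}$ iff either $e\notin\fmsf$, or $\omega_{U_e}\setminus\{e\}$ contains disjoint simple rays from $x$ and from $y$ along each of which $\w$ does not tend to $0$.

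For the "if" direction, fix a bi-infinite path $P$ through $e$ whose two halves are such rays. I would pass to subsequences along which $\w\ge c>0$. Write $\delta:=U_e-\max_{e'\in P\setminus e}U_{e'}$, using that $U$ is bounded away from $U_e$ on $P$ after choosing the path suitably. Then for all $\beta$ small enough we get $X_{e',\beta}>X_{e,\beta}$ for every $e'\in P\setminus\{e\}$, since $\w(e')^\beta\ge c^\beta\to1$ uniformly. Hence $e$ is least on $P$ and $e\notin\mathfrak{W}_\beta$ for all small $\beta$.

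For the "only if" direction, let $e\notin\mathfrak W_\beta$ along $\beta_n\searrow0$, witnessed by paths $P_n$ on which $e$ is $\prec_{\beta_n}$-least, and assume $e\in\fmsf$. I would take a subsequential limit of $P_n$ by compactness and local finiteness. Edges with $U_{e'}>U_e$ can only appear on $P_n$ where $\w(e')\ll\w(e)$, because $\w(e')^{\beta_n}(1-U_{e'})>\w(e)^{\beta_n}(1-U_e)$ forces $\w(e')/\w(e)\to$ large ratios to be needed, which fails for bounded ratios as $\beta_n\to0$. So every such edge lies at distance tending to infinity, or in regions where $\w$ is tiny. This yields the limiting rays in $\omega_{U_e}$.

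The same bookkeeping gives existence of the local limit and the monotone sandwich of Theorem~\ref{thm:local_lim_W}(1). I would take the existence of the limit from Theorem~\ref{thm:local_lim_W}, and I expect this identification to be the main technical obstacle. In particular, ruling out witnesses whose weights vanish along the path requires care with ties and with uniformity in $\beta$. I would first try a diagonal/König argument restricted to the finite balls where $\w$ is bounded below.

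\textbf{Step 2: rays versus heavy clusters.} I would show that in Bernoulli$(p)$ percolation a cluster admits a ray with nonvanishing weights iff it is heavy.

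A light infinite cluster has finite total weight. Hence only finitely many of its vertices have $\w\ge c$ for any $c>0$, so every ray vanishes. Conversely, for a heavy cluster, a nonvanishing ray exists by the results on heavy clusters in \cite{TT25} (heavy clusters have $\w$-nonvanishing ends). Combining with Step 1, a.s. $e\in\fmsf\setminus\mathfrak{W}_{0+}$ iff $x$ and $y$ lie in two distinct heavy clusters of $\omega_{U_e}\setminus\{e\}$.

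\textbf{Step 3: the equivalence.} Suppose $p_h=p_u$. Conditioning on $U_e=p$, for a.e. $p$ either $p<p_h$ and there are no heavy clusters, or $p>p_u$ and there is a unique infinite cluster; the single value $p=p_h$ has probability zero. In either case Step 2 shows no edge lies in $\fmsf\setminus\mathfrak W_{0+}$, which together with $\mathfrak W_{0+}\subseteq\fmsf$ gives equality.

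Conversely, suppose $p_h<p_u$, and pick $p\in(p_h,p_u)$, where there are infinitely many heavy clusters by Figure~\ref{Fig:phases}. By insertion/deletion tolerance there is positive probability that some fixed edge $e$ has its endpoints in distinct heavy clusters of $\omega_p\setminus\{e\}$. The same then holds with positive probability when $U_e$ lies in a small interval above $p$, using monotone coupling and the fact that heavy clusters persist. Hence $\mathfrak W_{0+}\neq\fmsf$ with positive probability, and by ergodicity this is not an a.s. equality.
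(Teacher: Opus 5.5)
Your reduction in Steps 2--3 follows the paper's argument. The paper identifies the difference set as the edges whose endpoints lie in distinct heavy clusters of $G[U_e]$, then applies Fubini over the independent label $U_e$ and a $0$--$1$ law. However, the paper obtains the needed description of $\mathfrak{W}_{0+}$ from Proposition~\ref{prop:local_lim_W_small}. Your Step 1 sketch of that description, which you flag as the main obstacle, fails in both directions.

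\emph{The ``if'' direction.} A ray along which $\w$ merely does not tend to $0$ cannot serve as a witness. Every edge of the path must lie in $\mathrm{Cone}(e,\prec_\beta)$, so you cannot pass to a subsequence. Moreover, for each fixed $\beta>0$, edges of very small weight satisfy $X_{e',\beta}<X_{e,\beta}$. Likewise, labels along a ray in $G[U_e]$ may accumulate at $U_e$. If infinitely many such edges have weight at most $c\,\w(e)$ with $c<1$, then for every $\beta>0$ some edge has $\varphi_{e,e'}(\beta)\le c^\beta\frac{1-U_{e'}}{1-U_e}<1$.

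``Choosing the path suitably'' needs two inputs you never invoke:
\begin{enumerate}
\item Pick $q\in(p_h,U_e)$ and use \cite{Haggstrom99} to find an infinite, hence heavy, $q$-cluster inside $C_{U_e}(x)$.
\item Use \cite[Theorem 5.5]{Timar06nonu} to find inside it an infinite connected subgraph $K_x$ contained in finitely many levels.
\end{enumerate}
On $K_x$ one has $\w\ge c\,\w(e)$ and $U\le q$. This gives the uniform bound $\varphi_{e,e'}(\beta)>c^\beta\frac{1-q}{1-U_e}>1$ for all small $\beta$. Your Step 2 fact that heavy clusters have nonvanishing ends is too weak to supply this.

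\emph{The ``only if'' direction.} Local limits of the witnesses $P_n$ only show that $x$ and $y$ lie in distinct infinite clusters of $G[U_e]$. The constraint on cone edges, $\w(e')>\w(e)(1-U_e)^{1/\beta_n}$, degenerates as $\beta_n\to0$, so no weight information survives the limit. Hence the argument cannot exclude a light infinite cluster, which occurs for $U_e\in(p_c,p_h)$; this is exactly the distinction the theorem is about. Your inequality is also reversed: an edge with $U_{e'}>U_e$ can enter the cone only if $\w(e')\gg\w(e)$.

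What is needed is a trapping argument:
\begin{enumerate}
\item If $C_{U_e}(x)$ is light, only finitely many edges of $\partial_E C_{U_e}(x)$ have weight exceeding $\w(e)$.
\item So for small $\beta$ every boundary edge is $\prec_\beta$-below $e$, and the component of $x$ in $\mathrm{Cone}(e,\prec_\beta)$ stays inside the light cluster.
\item All its vertices other than $x$ have weight above $\w(e)(1-U_e)^{1/\beta}>0$, so lightness forces the component to be finite.
\item Lemma~\ref{lem:wired-characterization} then gives $e\in\mathfrak{W}_\beta$.
\end{enumerate}

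\emph{A minor point in Step 3.} Distinctness of two heavy clusters does not persist as $p$ increases, so ``heavy clusters persist'' does not carry you to an interval above $p$. Instead, run the insertion-tolerance argument at every $p\in(p_h,p_u)$ and integrate over $U_e$.
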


%%%%%%%%%%%%%%%%%%%%%%%%%%%%%%%%%%%%%%%%%%%%%%%%%%%%%%%%%%
\section{Local flexibility of $\mathfrak{F}_\beta(G)$}\label{sec:flexibility_F}
%%%%%%%%%%%%%%%%%%%%%%%%%%%%%%%%%%%%%%%%%%%%%%%%%%%%%%%%%%
We call a set of edges $C\subseteq E$ a \textbf{minimal cut} if its removal from $G$ disconnects the graph and it does not admit a proper subset with the same property. The following lemma shows that $\mathfrak{F}_\beta(G)$ retains the key property of $\fmax$ in relation to the cut sets.

\begin{lem}\label{lem:cut_set}
Let $G=(V,E)$ be a connected locally finite graph, $\w:V\to\bR^+$ be a weight function. For every $\beta\in(0,\infty)$, the random forest $\mathfrak{F}_\beta(G)$ intersects every $\w$-vanishing minimal cut a.s.
\end{lem}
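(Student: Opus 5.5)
The plan is to argue that on every $\w$-vanishing minimal cut $C$, the maximal element of $C$ with respect to $\prec_\beta$ exists and must survive in $\mathfrak{F}_\beta(G)$.

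First I record the structure of $C$. Since $C$ is a minimal cut, $G\setminus C$ has exactly two components $A$ and $B$, and every edge of $C$ joins $A$ to $B$. If $C$ is finite, any edge of $C$ lies on a finite cycle only together with at least one other edge of $C$ (a cycle crosses the cut an even number of times). In the free forest an edge is deleted if and only if it is least on some finite cycle. So the $\prec_\beta$-greatest edge $e^*$ of a finite $C$ is never least on a cycle through it, since that cycle contains another edge of $C$ which is smaller. Hence $e^*\in\mathfrak{F}_\beta$.

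When $C$ is infinite, I will show that a $\prec_\beta$-maximal element still exists a.s., using vanishing together with $\beta>0$. Since $C$ is $\w$-vanishing, interpreted on endpoints (each edge's weight $\w(e)=\min$ of its endpoint weights), for every $\varepsilon>0$ only finitely many edges of $C$ have $\w(e)\ge\varepsilon$. Fix any $e_0\in C$. A.s. $U_{e_0}<1$, so $X_{e_0,\beta}=\w(e_0)^\beta(1-U_{e_0})=:\delta>0$. Every $e\in C$ satisfies $X_{e,\beta}\le \w(e)^\beta$, so $X_{e,\beta}\ge\delta$ forces $\w(e)\ge\delta^{1/\beta}$. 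Only finitely many edges of $C$ satisfy this, which is exactly where $\beta>0$ is used. Thus $\{e\in C: e_0\preceq_\beta e\}$ is finite, and a $\prec_\beta$-maximum $e^*$ of $C$ exists. The cycle argument above then applies verbatim: any finite cycle through $e^*$ contains another edge of $C$, which is $\prec_\beta$-smaller, so $e^*$ is not least on it and is kept.

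The remaining care is with "a.s." There may be uncountably many minimal cuts, but the argument only needs $U_e<1$ for all $e\in E$ (a countable event) and the a.s. linearity of $\prec_\beta$. On that full-measure event the conclusion holds simultaneously for every $\w$-vanishing minimal cut.

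The main subtlety I expect is the meaning of "$\w$-vanishing" for a set of edges. I would check that it is taken through endpoint weights, and that $\w(e)=\min$ makes the edge weights tend to $0$ along $C$. This holds because each edge has an endpoint in the vanishing vertex set, and local finiteness ensures that infinitely many distinct edges give infinitely many distinct endpoints.
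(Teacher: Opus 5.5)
Your proof is correct and follows the paper's argument. The vanishing of $C$, together with $X_{e,\beta}\le\w(e)^\beta$ and $\beta>0$, gives a $\prec_\beta$-largest edge of $C$, and that edge survives because every cycle through it must cross the minimal cut again at a $\prec_\beta$-smaller edge. Your separate treatment of finite cuts and your check that a minimal cut has exactly two sides only spell out details the paper leaves implicit.
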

\begin{proof}
Restrict to the probability one event on which the random variables $\{X_{e,\beta}:e\in E\}$ are positive and pairwise distinct. Let $C\subseteq E$ be any $\w$-vanishing minimal cut. 

Fix $e\in C$. Since $C$ is $\w$-vanishing and $X_{e,\beta}\le\w(e)^\beta,$ there are only finitely many edges $e'\in C$ such that $X_{e',\beta}\ge X_{e,\beta}.$ It follows that $C$ admits a unique $\prec_\beta$-largest edge; denote it by $e_*$.  Next we claim that $e_*\in\mathfrak{F}_\beta(G)$.
Indeed, if $e_*$ were deleted, there would be a cycle on which $e_*$ is the $\prec_\beta$-smallest edge. Since $C$ is a minimal cut, every cycle
containing $e_*$ contains another edge $f\in C$, but by the choice of $e_*$, $f\prec_\beta e_*$, yielding that $e_*$ cannot be is the $\prec_\beta$-smallest edge on the cycle. Therefore $e_*\in C\cap\mathfrak{F}_\beta(G)$.
\end{proof}

The following follows immediately from Lemma~\ref{lem:cut_set}.
\begin{cor}\label{cor:F_nonvan}
    Let $G=(V,E)$ be a connected locally finite graph and $\w:V\to\bR^+$ be a weight function. For any $\beta>0$ assume that $\mathfrak{F}_\beta(G)$ is disconnected, then it contains only $\w$-nonvanishing trees a.s.
\end{cor}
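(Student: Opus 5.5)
We argue by contradiction, working on the probability one event of Lemma~\ref{lem:cut_set}, on which $\mathfrak{F}_\beta(G)$ meets every $\w$-vanishing minimal cut. Suppose $\mathfrak{F}_\beta(G)$ is disconnected and has a tree $T$ that is not $\w$-nonvanishing. Then $T$ is either finite, or infinite with $\w$-vanishing vertex set. In both cases $V(T)$ is a vanishing-type set: either finite, or every sequence of distinct vertices in it has weights tending to $0$. (For a finite tree we read ``$\w$-vanishing'' in this degenerate sense; this is what the lemma needs, since its proof only uses that finitely many edges of the cut have $X$-value above any given positive level.)

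First, consider the edge boundary $\partial_E V(T)$, the set of edges of $G$ with exactly one endpoint in $V(T)$. It is nonempty, because $G$ is connected and $\mathfrak{F}_\beta(G)$ is disconnected, so $V(T)\neq V$. None of these edges lies in $\mathfrak{F}_\beta(G)$, since $T$ is a connected component of the forest.

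The boundary itself need not be a minimal cut, so we extract one. Pick a boundary edge $e=(x,y)$ with $x\in V(T)$ and $y\notin V(T)$. Let $D$ be the connected component of $G\setminus V(T)$ containing $y$. Take $C$ to be the set of edges between $V(T)$ and $D$. Removing $C$ separates $V(T)$ from $D$. Moreover, $C$ is a minimal cut: $V(T)$ and $D$ are each connected in $G$, so restoring any single edge of $C$ reconnects the two sides.

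Next we check that $C$ is $\w$-vanishing. Every edge of $C$ has an endpoint in $V(T)$, and the edge weight is $\w((u,v))=\min\{\w(u),\w(v)\}$, so it is at most the weight of that endpoint. By local finiteness, each vertex lies on only finitely many edges. Hence any infinite sequence of distinct edges in $C$ has endpoints in $V(T)$ that include infinitely many distinct vertices, and the weights of these vertices tend to $0$ by the vanishing property of $V(T)$. It follows that the edge weights tend to $0$ along the sequence. If $C$ is finite, the cut-lemma argument applies trivially, since a finite set has a maximum.

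Lemma~\ref{lem:cut_set} now gives an edge of $C$ in $\mathfrak{F}_\beta(G)$. This contradicts the first step, since $C\subseteq\partial_E V(T)$ contains no forest edges. Therefore every tree of $\mathfrak{F}_\beta(G)$ is $\w$-nonvanishing.

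The main obstacle is terminological: Lemma~\ref{lem:cut_set} is stated for $\w$-vanishing cuts, and we must make sure that finite cuts, arising from finite trees, are covered. They are, because the proof of the lemma only needs that the cut has a $\prec_\beta$-largest edge, which is automatic for a finite set.
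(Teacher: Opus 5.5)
Your proof is correct and is exactly the argument the paper leaves implicit when it says the corollary ``follows immediately'' from Lemma~\ref{lem:cut_set}: the edges between a non-nonvanishing tree $T$ and one component $D$ of $G\setminus V(T)$ form a $\w$-vanishing minimal cut (with finite cuts handled trivially, as you note) that contains no forest edge. One point is worth making explicit in your minimality check: the other components of $G\setminus V(T)$ stay attached to $V(T)$ through edges outside $C$, so $G\setminus C$ has exactly the two connected pieces $D$ and $V\setminus D$, and restoring any single edge of $C$ reconnects all of $G$.
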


We will later prove the same statement for $\mathfrak{W}_\beta(G)$ via a completely different argument (see Lemma~\ref{lem:W_nonvan}). Notably, since $\mathfrak{W}_\beta(G)\subseteq\mathfrak{F}_\beta(G)$, it actually provides an alternative proof of this corollary.

Our next result is a set of key properties (similar to those in \cite[Lemma 7.3]{BLPS99inv}) that hold for $\mathfrak{F}_\beta(G)$ with any fixed value of the parameter $\beta$. We would like to highlight that similar to $\fmsf$ the last part of this proposition is enabled by the fact that for any finite subtree $T\subseteq G$ labels $\mvU$ on $G$ could be altered in finitely many places such that $T\subseteq \mathfrak{F}_\beta(G)$, which is not true for the $\fmax$, and was one of the original motivations for the introduction of the $\beta$-Skewed version of the random forest.  

\begin{prop}\label{prop:F_prop}
Let $G=(V,E)$ be a connected locally finite graph, $\w:V\to\bR^+$ be a weight function. For $\beta\in(0,\infty)$ let $\mathfrak{F}_\beta(G)$ be the $\beta$-Skewed Free Maximal Spanning Forest on $G$. For every $v\in V$ let $T(v)$ denote the tree in $\mathfrak{F}_\beta(G)$ that contains $v$. The following properties hold:
\begin{enumerate}
    \item\label{main_prop_heavy} $\forall v\in V$ the tree $\w(T(v))=\w(V)$ a.s.
    \item \label{main_prop_size} $\forall v\in V$ the tree $\abs{T(v)}=\abs{V}$ a.s.
    \item\label{main_prop_ends} $\forall v\in V$ and $k\in \bZ^+$, if $G$ has $\ge k$ ends (resp.~$\w$-nonvanishing ends), then $T(v)$ also has $\ge k$ ends (resp.~$\w$-nonvanishing ends) with positive probability.
\end{enumerate}
\end{prop}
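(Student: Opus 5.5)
We prove parts (1) and (2) together from Lemma~\ref{lem:cut_set}, and part (3) by a local modification of the labels on finitely many edges. Throughout we work on the probability-one event of Lemma~\ref{lem:cut_set}, with all $X_{e,\beta}$ positive and distinct. Since $G$ has countably many finite connected subgraphs, it suffices to show that a.s.\ no tree of $\mathfrak{F}_\beta(G)$ is both \emph{light} and a \emph{proper} subset of $V$.

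\emph{Parts (1) and (2).} Suppose a tree $T=T(v)\neq V$ satisfies $\w(T)<\infty$; this includes every finite $T$. First, the edge boundary $\partial_E T$ is $\w$-vanishing. Indeed, take any sequence of distinct boundary edges $e_n=(x_n,y_n)$ with $x_n\in T$. By local finiteness each vertex lies on only finitely many of the $e_n$, so the $x_n$ are eventually distinct. Summability of $\w$ on $T$ then gives $\w(e_n)\le\w(x_n)\to0$.

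Next, pick a component $A$ of $G\setminus T$. The set $V\setminus A$ is connected, since it consists of the connected set $T$ together with the other components of $G\setminus T$, each of which is attached to $T$. Hence the set of edges $E(T,A)$ is a minimal cut. It is contained in $\partial_E T$, so it is $\w$-vanishing, and by Lemma~\ref{lem:cut_set} the forest $\mathfrak{F}_\beta(G)$ contains one of its edges. But every edge of $E(T,A)$ has exactly one endpoint in $T$, and $T$ is a whole component of the forest, so this is impossible.

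Consequently every tree is either all of $V$ or heavy. If $\w(V)<\infty$, every tree equals $V$. If $\w(V)=\infty$, every tree is heavy, so $\w(T(v))=\infty=\w(V)$. Being heavy, the tree is also infinite, so $\abs{T(v)}=\abs{V}$.

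\emph{Part (3).} Suppose $G$ has $\ge k$ ends (resp.\ $\w$-nonvanishing ends). Fix a finite connected $K\ni v$ such that $G\setminus K$ has infinite (resp.\ $\w$-nonvanishing) components $A_1,\dots,A_k$. Let $S$ be a spanning tree of $K$ together with one chosen edge $e_i\in E(K,A_i)$ for each $i$. Let $D$ be the finite set of all other edges with an endpoint in $K$. Consider the event
\[
\min_{f\in S}X_{f,\beta}>\delta \quad\text{and}\quad \max_{e\in D}X_{e,\beta}<\varepsilon .
\]
Because only finitely many labels are constrained, this event has positive probability for any $0<\varepsilon<\delta\min_{S}\w^\beta$.

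On this event $S\subseteq\mathfrak{F}_\beta(G)$. Any cycle through $f\in S$ must either use a non-tree edge of $K$ or leave $K$ and re-enter. In the latter case it uses at least two edges of $\partial_E K$, at least one of which is not $e_i$ and hence lies in $D$. Either way the cycle contains an edge of $D$, which is $\prec_\beta$-smaller than $f$, so $f$ is never the least edge on a cycle. Thus $T(v)\supseteq S$ enters every $A_i$. It then remains to show that $T(v)\cap A_i$ contains an infinite (resp.\ $\w$-nonvanishing) ray for each $i$; this yields $\ge k$ distinct ends of $T(v)$ with positive probability.

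\emph{Main obstacle.} The hard step is this last one: showing that the branch $Q_i$ of $T(v)$ hanging off $e_i$ inside $A_i$ is infinite (resp.\ $\w$-nonvanishing). The difficulty is that an edge of $A_i$ can be deleted in $G$ without being deleted in $A_i$ only if it is least on a cycle passing through $K$. Such a cycle uses some edge of $D$, so a deleted edge of this kind has $X_{\cdot,\beta}<\varepsilon$. I would first try to compare with $\mathfrak{F}_\beta(A_i)$, whose trees are heavy, hence infinite (resp.\ nonvanishing), by parts (1) and (2) applied to $A_i$. The plan is then to let $\varepsilon\downarrow0$ with the labels outside $D$ held fixed. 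For each fixed edge $f$, the probability that $X_{f,\beta}<\varepsilon$ tends to $0$, so the extra deletions near the base of $e_i$ should disappear in the limit. If this comparison fails, I would fall back on the finite-cut argument of part (1) applied to $Q_i$. If $Q_i$ were finite (resp.\ light), the cut separating $Q_i$ from an infinite complementary component of $A_i\setminus Q_i$ would be $\w$-vanishing. Lemma~\ref{lem:cut_set} would then force its $\prec_\beta$-largest edge into the forest, and I would rule out, by the choice of $\varepsilon$, that this edge lies in $D$.
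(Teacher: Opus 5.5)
\textbf{Part (3) has a genuine gap: its final step is left open, and the step as you narrowed it is false.} Your parts (1) and (2) are correct and match the paper's argument (Lemma~\ref{lem:cut_set} applied to the minimal cut $E(T,A)$). Your event does force $S\subseteq\mathfrak{F}_\beta(G)$, once the constants are corrected to $0<\varepsilon<\delta<\min_{f\in S}\w(f)^\beta$.

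\textbf{The branch $Q_i$ need not be infinite.} Write $e_i=(k_i,a_i)$. Suppose $a_i$ has a single neighbour $a'$ in $A_i$, and $a'$ is joined to $K$ by some $d\in D$. With positive probability on your event, $X_{(a_i,a'),\beta}<X_{d,\beta}$. Then $(a_i,a')$ is the least edge on the cycle formed by $e_i$, $(a_i,a')$, $d$ and a path in the spanning tree of $K$. So $Q_i=\{a_i\}$, and $T(v)$ can only reach infinity in $A_i$ through forest edges of $D$. This is why your fallback runs into forest edges of $D$ on the cut around a component of $A_i\setminus Q_i$, and no choice of $\varepsilon$ can exclude them.

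\textbf{Your first route fails too.} Every cycle of $A_i$ is a cycle of $G$, so the restriction of $\mathfrak{F}_\beta(G)$ to $A_i$ is contained in $\mathfrak{F}_\beta(A_i)$. That bounds the branch from above, not below. Letting $\varepsilon\downarrow0$ only gives edge-by-edge information, whereas infiniteness or nonvanishing of a branch is not a local property.

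\textbf{Lightness is the wrong hypothesis to refute for the nonvanishing version.} Heavy sets can be $\w$-vanishing. In Example~\ref{ex:nonuni-tree}, the subtree obtained by repeatedly passing to the $s$ lighter neighbours is heavy and $\w$-vanishing. The hypothesis to refute is $\w$-vanishing.

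\textbf{The missing idea is to argue with all of $R_i:=T(v)\cap A_i$.} Then neither $D$ nor $\varepsilon$ plays any role.
\begin{itemize}
\item Suppose $R_i$ is finite (resp.\ $\w$-vanishing). Then $R_i\neq A_i$, so take any component $P$ of $A_i\setminus R_i$. Its neighbours lie in $R_i\cup K$.
\item $V\setminus P$ is connected. Each vertex of $R_i$ reaches $K$ along $T(v)$ inside $A_i$, and the remaining components of $A_i\setminus R_i$ and of $G\setminus K$ attach to $K\cup R_i$. Hence $E(P,V\setminus P)$ is a minimal cut.
\item This cut is finite or $\w$-vanishing. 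Only finitely many of its edges meet $K$, and the others have weight at most that of their endpoint in $R_i$.
\item Every edge of the cut joins $K\cup R_i\subseteq T(v)$ to $P$, and $P$ misses $T(v)$. So no cut edge lies in $\mathfrak{F}_\beta(G)$. This contradicts Lemma~\ref{lem:cut_set}, exactly as in your part (1); finite cuts are covered by the same proof.
\item $R_i$ has finitely many components, since each contains the endpoint of a forest edge from the finite set $E(K,A_i)$. So one component is infinite (resp.\ $\w$-nonvanishing) and carries an end (resp.\ a $\w$-nonvanishing end) of $T(v)$ inside $A_i$.
\end{itemize}

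The paper gets there differently. It asserts that on its forcing event $\mathfrak{F}_\beta(G)$ agrees on $H_i:=K\cup A_i\cup E(K,A_i)$ with $\mathfrak{F}_\beta(H_i)$, and then uses the tree dichotomy of parts (1)--(2) inside $H_i$. The direct cut argument above avoids that restriction step.
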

\begin{proof}\hfill

    \textbf{Part~\eqref{main_prop_heavy}.} For every $\beta\in(0,\infty)$ Corollary~\ref{lem:cut_set} implies that either $\mathfrak{F}_\beta(G)$ is connected, in which case obviously, or $\w(T(v))=\w(V)$ or every tree is nonvanishing. In the later case both weights have to be infinite and thus equality holds.
    
    \textbf{Part~\eqref{main_prop_size}.} Since heavy sets are necessarily infinite, the proof follows from the same argument as part~\eqref{main_prop_heavy}.
    
    \textbf{Part~\eqref{main_prop_ends}} If $G$ has $\ge k$ ends then it admits a connected finite subgraph $F$ such that $G\setminus F$ has at least $k$ infinite components. Let $m$ be the smallest weight of a vertex in $F$ and $T_F$ be a spanning tree on $F$. Consider an event $A_F$ where labels in $\mvU$ satisfy the following two properties:
    \begin{enumerate}
        \item $1-U_e>\frac12\left(\frac{m}{\w(e)}\right)^{\beta}$ for every $e\in T_F$,\\
        \item $1-U_e<\frac12\left(\frac{m}{\w(e)}\right)^{\beta}$ for every $e\notin T_F$ such that at least one endpoint of $e$ is in $T_F$.
    \end{enumerate}
    It follows that on such an event $T_F\subset \mathfrak{F}_\beta(G)$; indeed any cycle that contains an edge from $T_F$ must contain an edge $e\notin T_F$ such that at least one of its endpoint in $T_F$.
    
    Let $S$ be any of the connected components created by the removal of $F$ from $G$ and let $\partial_E(S,F)$ be the set of edges between $F$ and $S$ in $G$. 
    Notice that on the event $A_F$ the random forest $\mathfrak{F}_\beta(G)\cap (S\cup F\cup \partial_E(S,F))$ coincides with $\mathfrak{F}_\beta(S\cup F\cup \partial_E(S,F))$.
    In particular, $\mathfrak{F}_\beta(G)\cap S$ is finite (resp.\ vanishing) if and only if  $S$ is finite (resp.\ vanishing). The conclusion now follows from the assumption that removal of $F$ creates at least $k$ infinite (resp.\ nonvanishing) and local finiteness. Finally to derive the desired statement for an arbitrary vertex $v\in V$, enlarge $F$ so that $v\in F$.
\end{proof}

\begin{proof}[Proof of Theorem~\ref{thm:main_CTT}]
As we seen in the proof of Proposition~\ref{prop:F_prop} for each cluster $C$ with $\ge3$ $\w$-nonvanishing ends, for $\w$-trifurcation $F$ of $C$ there is an event $A_F$ of positive probability, on which $\mathfrak{F}_\beta(C)$ contains a tree $T$ with $\ge3$ $\w$-nonvanishing ends. A standard application of the MTP, then implies that each such tree has infinitely many and no isolated $\w$-nonvanishing ends. It remains to improve to the `almost sure' statement. 

For each cluster $C$ with $\ge3$ $\w$-nonvanishing ends let $k_C$ be the smallest natural number such that there is a $\w$-trifurcation $F\subset C$ of size at most $k_C$ such that for some spanning tree $T_F\subseteq F$ $\pr(A_F)\ge k_C^{-1}$.

Let $\cF(C)$ be an invariant maximal set of $\w$-trifurcations that satisfy these condition are are pairwise at least distance $2$ (in $C$) from each other. For each $F\in \cF$ choose an admissible tree $T_F$ uniformly at random (independently of everything else). Since $\cF(C)$ is non-empty and $C$ is heavy, by MTP, it has to be infinite. Now, conditional on $C$, by disjointness the events $(A_F)_{F\in\cF(C)}$ are independent. Thus, after arbitrarily enumerating elements in $\cF(C)$, we get
\[
\pr\left(\bigcap_{i=1}^n A_{F_i}^c \mid C\right)\le \left(1-\frac{1}{k_C}\right)^n\to0.
\]

The moreover part follows from the same argument as in the proof of \cite[Theorem 4.15]{CTT22}. There the deletion tolerance was used only to create a $\w$-trifurcation vertex. The finite-subtree forcing property of $\mathfrak{F}_\beta$, removes the need for it.
\end{proof}

%%%%%%%%%%%%%%%%%%%%%%%%%%%%%%%%%%%%%%%%%%%%%%%%%%%%%%%%%%
\section{Local limits of $\mathfrak{W}_\beta$}\label{sec:loc_lim_W}
%%%%%%%%%%%%%%%%%%%%%%%%%%%%%%%%%%%%%%%%%%%%%%%%%%%%%%%%%%
In this section we prove Theorem~\ref{thm:local_lim_W}. For the purposes of presentation we will derive the statement into two separate propositions that teat small $\beta$ and large $\beta$ limits respectively. In fact, in the former case we prove something stronger, as we explicitly characterize $\mathfrak{W}_{0+}$, see Proposition~\ref{prop:local_lim_W_small}.

We start by the following lemma that will be used repeatedly throuout this section.

\begin{lem}\label{order_alignment}
     Let $G=(V,E)$ be a connected locally finite graph, $\w:V\to\bR^+$ be a weight function. For any finite set of edges $H\subseteq E$ then there exists  $A_H(\mvU),B_H(\mvU)<\infty$, such that restricted to $H$
     \begin{enumerate}
         \item\label{order_alignment_0} the $\prec_{\beta}$ order agrees with $\prec_{0}$ order for every $\beta<A_H(\mvU)$.
         \item\label{order_alignment_infty} the $\prec_{\beta}$ order agrees with $\prec_{\w}$ order for every $\beta>B_H(\mvU)$.
     \end{enumerate}
\end{lem}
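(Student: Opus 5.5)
Since $H$ is finite, it suffices to handle each of the finitely many pairs $\{e,e'\}\subseteq H$ separately and then take $A_H$ to be the minimum and $B_H$ the maximum of the resulting pairwise thresholds. Throughout we restrict to the almost sure event on which the labels $\{U_e\}_{e\in H}$ are pairwise distinct and lie in $(0,1)$, so that all quantities $1-U_e$ are positive and distinct. Both orders are then determined pairwise, so agreement on all pairs gives agreement of the orders restricted to $H$.

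For part~\eqref{order_alignment_0}, fix $e\ne e'$ in $H$ and assume without loss of generality that $1-U_e<1-U_{e'}$, i.e.\ $e\prec_0 e'$. We need $X_{e,\beta}<X_{e',\beta}$ for all small $\beta$, which is equivalent to
\[
\beta\,\log\frac{\w(e)}{\w(e')}<\log\frac{1-U_{e'}}{1-U_e}.
\]
The right-hand side is a strictly positive finite number. If $\w(e)\le\w(e')$ the inequality holds for every $\beta\ge0$. Otherwise it holds for all $\beta<\beta_{e,e'}:=\log\frac{1-U_{e'}}{1-U_e}\big/\log\frac{\w(e)}{\w(e')}$, which is positive. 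Set $A_H$ to be the minimum of these positive thresholds, or $A_H=\infty$ if none is finite. Then $A_H>0$; strictly speaking the lemma only asks for $A_H<\infty$, but positivity is the useful content.

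For part~\eqref{order_alignment_infty}, fix the pair and distinguish two cases. If $\w(e)=\w(e')$, then $X_{e,\beta}<X_{e',\beta}$ if and only if $1-U_e<1-U_{e'}$ for every $\beta$, which is exactly the tie-breaking rule in $\prec_\w$. So the two orders agree on this pair for all $\beta$. If $\w(e)<\w(e')$, then $e\prec_\w e'$, and the same logarithmic inequality (with roles as needed) holds once
\[
\beta>\log\frac{1-U_e}{1-U_{e'}}\Big/\log\frac{\w(e')}{\w(e)},
\]
which is a finite threshold. Take $B_H$ to be the maximum over pairs (and at least $0$).

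The only point needing care is finiteness and positivity of these thresholds. This is exactly why we restrict to the event where the labels lie in $(0,1)$ and are distinct, and why $H$ must be finite. No genuine obstacle is expected; the proof is a routine comparison of two affine functions of $\beta$ in log scale.
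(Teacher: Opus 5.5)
Your proposal is correct and follows the paper's proof. Both arguments compare $X_{e,\beta}$ and $X_{e',\beta}$ pair by pair, where your explicit logarithmic thresholds play the role of the paper's monotonicity and continuity argument for the ratio $\varphi_{e,e'}(\beta)$, and then take the minimum and maximum over the finitely many pairs in $H$. One small point: when no pair gives a finite threshold in part (1), take $A_H=1$ instead of $\infty$, as the paper does, so that the literal requirement $A_H<\infty$ holds.
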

\begin{proof}
Consider any pair of edges $e,e'\in H$.    

\textbf{Case 1: $\w^o(e')=\w^o(e)$}. In this case $e\prec_{\w} e'$ whenever $e\prec_0 e'$, that is $1-U_e<1-U_{e'}$; we have that $e\prec_\beta e'$ for every $\beta\ge 0$. If all pairs of edges in $H$ fall into this case set, $A_H(\mvU)=1$ and $B_H(\mvU)=0$.

\textbf{Case 2: $\w^o(e')>\w^o(e)$.} Define
        \begin{equation}\label{def:phi_ratio}
        \varphi_{e,e'}(\beta) :=\frac{X_{e',\beta}}{X_{e,\beta}}=\left(\frac{\w^o(e')}{\w^o(e)}\right)^\beta \frac{1-U_{e'}}{1-U_e}. 
        \end{equation} 
    Note that $\varphi_{e,e'}(\beta)>1$ if and only if $e\prec_\beta e'.$ 
    
    Towards part~\eqref{order_alignment_infty}, since $\w^o(e')/\w^o(e)>1$, the function $\varphi_{e,e'}$ is increasing and tends to infinity as $\beta\to\infty$. In particular, there exists $B_{e,e'}(\mvU)<\infty$ such that, for every $\beta>B_{e,e'}(\mvU)$, we have $\varphi_{e,e'}(\beta)>1$.

    To prove part~\eqref{order_alignment_0}, first suppose that $e\prec_0e'$. Then $\varphi_{e,e'}(0)>1$, and hence, by monotonicity, $\varphi_{e,e'}(\beta)>1$ for every $\beta\ge0$. Suppose instead that $e'\prec_0e$. Then $\varphi_{e,e'}(0)<1$, and continuity of $\varphi_{e,e'}$ implies that there exists $A_{e,e'}(\mvU)>0$ such that $\varphi_{e,e'}(\beta)<1$ for every $\beta<A_{e,e'}(\mvU)$. Thus, in either case, there exists $A_{e,e'}(\mvU)>0$ such that the $\prec_\beta$ and $\prec_0$ orders of $e$ and $e'$ agree for every $\beta<A_{e,e'}(\mvU)$.

    Setting 
    \[
        A_H(\mvU)= \min\{A_{e,e'}(\mvU):e,e'\in H,\ \w^o(e')>\w^o(e)\}
    \]
    and
    \[
        B_H(\mvU)= \max\{B_{e,e'}(\mvU):e,e'\in H,\ \w^o(e')>\w^o(e)\},
    \]
    completes the proof.
\end{proof}

Recall the monotone coupling of Bernoulli$(p)$ percolations on $G$. For each $p\in[0,1)$, starting from a sequence of i.i.d.~$\mathrm{Uniform}[0,1]$ random variables $\mvU=(U_e)_{e\in E}$ we define a configuration $G[p]=\{e\in E: U_e<p\}$. Clearly, each edge is present in $G[p]$ independently with probability $p$. Note that usually in the definition of $G[p]$ the inequality is not strict, however, for out purposes it is easier to exclude the edges whose labels are exactly $p$.
Let $C_p(x)$ denote the cluster of $x$ in $G[p]$. Motivated by the proof of \cite[Proposition 3.6]{LPS06msf} next we show component characterization for an edge belonging to a wired maximal spanning forest produced by an abstract linear order $\prec$ on the edges $E$. First, given such an order $\prec$ for an edge $e\in E$ define the upward cone
    \[
    \mathrm{Cone}(e,\prec):=(V,\{e'\in E: e\prec e'\}).
    \]

\begin{lem}\label{lem:wired-characterization}
    Let $G=(V,E)$ be a connected locally finite graph and $\prec$ be a linear order on $E$. Suppose $\mathfrak{W}_{\prec}$ is the wired $\prec$-maximal spanning forest, that is a random spanning forest acquired by deleting the $\prec$-least edge in each cycle and bi-infinite simple path.
    Then $e=(x,y)\in\mathfrak{W}_{\prec}$ if and only if $x$ and $y$ lie in different connected components of $\mathrm{Cone}(e,\prec)$ and at least one of these components is finite.
\end{lem}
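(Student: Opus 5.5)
We prove both directions by unwinding the definition of $\mathfrak{W}_{\prec}$: the edge $e=(x,y)$ is deleted exactly when it is the $\prec$-least edge of some finite cycle or of some bi-infinite simple path through $e$. Both conditions depend only on the edges strictly above $e$. Specifically, $e$ is least on a cycle $C$ if and only if $C\setminus\{e\}$ is a finite simple path from $x$ to $y$ inside $\mathrm{Cone}(e,\prec)$. Likewise, $e$ is least on a bi-infinite simple path $P$ if and only if $P\setminus\{e\}$ consists of two vertex-disjoint one-sided infinite simple paths in $\mathrm{Cone}(e,\prec)$, one starting at $x$ and one at $y$. So $e\notin\mathfrak{W}_{\prec}$ if and only if, in $\mathrm{Cone}(e,\prec)$, either (a) $x$ and $y$ are connected, or (b) there exist disjoint infinite rays from $x$ and from $y$. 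The lemma's condition is exactly the negation of (a)-or-(b), once we translate (b) into component language.

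\textbf{Step 1: $e\in\mathfrak{W}_{\prec}$ implies the component condition.} Suppose $e\in\mathfrak{W}_{\prec}$. Then (a) fails, so $x$ and $y$ lie in different components $K_x\neq K_y$ of $\mathrm{Cone}(e,\prec)$. If both $K_x$ and $K_y$ were infinite, local finiteness and K\"onig's lemma would give an infinite simple ray from $x$ in $K_x$ and one from $y$ in $K_y$. These rays are automatically disjoint because they lie in different components. Joining them through $e$ gives a bi-infinite simple path on which $e$ is least, so $e$ would be deleted, a contradiction. Hence at least one component is finite.

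\textbf{Step 2: the component condition implies $e\in\mathfrak{W}_{\prec}$.} Suppose $x$ and $y$ lie in different components and one of them, say $K_x$, is finite. Then no cycle has $e$ as its least edge, since that would force (a). Now suppose a bi-infinite simple path $P$ through $e$ had $e$ as its least edge. The half of $P$ starting at $x$ is an infinite simple ray using only edges of $\mathrm{Cone}(e,\prec)$, so it stays in $K_x$. This is impossible, since $K_x$ is finite. Therefore $e$ is never deleted.

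The only point needing care is the use of K\"onig's lemma: every infinite connected locally finite graph contains an infinite simple ray from any of its vertices. This applies here because each component of $\mathrm{Cone}(e,\prec)$ is a subgraph of the locally finite graph $G$. Note also that the argument uses no randomness: the statement holds for every linear order $\prec$. I do not expect any real obstacle beyond setting up the cycle/path-to-component translation cleanly.
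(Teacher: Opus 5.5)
Your proof is correct and follows the same route as the paper, which also reduces deletion of $e$ to two cases: a path between $x$ and $y$ in $\mathrm{Cone}(e,\prec)$, or two disjoint infinite rays from $x$ and from $y$ in it. You spell out details the paper leaves implicit: K\"onig's lemma to extract rays from infinite components, and the observation that a ray from $x$ built from cone edges cannot fit inside a finite component.
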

\begin{proof}
    If $x$ and $y$ are connected by a path in $\mathrm{Cone}(e,\prec)$, this produces a cycle in which $e$ is the smallest with respect to $\prec$. If $x$ and $y$ are in different infinite connected components of $\mathrm{Cone}(e,\prec)$, this produces a bi-infinite simple path in which $e$ is the smallest with respect to $\prec$. Finally, any finite cycle or bi-infinite simple path that deletes $e$ must fall into one of these two cases.
\end{proof}

\begin{prop}\label{prop:local_lim_W_small}
    Let $G$ be a locally finite connected graph, $\Gamma$ be a closed subgroup of $\Aut(G)$ that acts transitively on $G$, and $\w_\Gamma$ be as in \eqref{def:haarweights}. Then $(x,y)\in \mathfrak{W}_{0+}(G,\mvU)$ if and only if the components of $x$ and $y$ are distinct in $G[U_e]$ at least on of which is $\w_\Gamma$-light.
\end{prop}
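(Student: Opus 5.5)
The plan is to reduce everything to Lemma~\ref{lem:wired-characterization} applied to $\prec_\beta$, and to compare the skewed cone with the Bernoulli configuration $G[U_e]$. Fix $e=(x,y)$ and set $r(f):=\w(e)/\w(f)$. By definition, an edge $f$ lies in $\mathrm{Cone}(e,\prec_\beta)$ if and only if
\[
1-U_f> r(f)^{\beta}\,(1-U_e),
\]
where ties are a.s.\ irrelevant. At $\beta=0$ this cone is exactly $G[U_e]$. For $\beta>0$ it gains edges with $\w(f)>\w(e)$ and $U_f$ slightly above $U_e$, and loses edges with $\w(f)<\w(e)$ and $U_f$ slightly below $U_e$. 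The observation that drives everything is that if $\w(f)\le \delta\,\w(e)$ with $\delta^{-\beta}(1-U_e)\ge 1$, then $f$ can never be in the cone. So for each fixed $\beta>0$, the cone only uses vertices of weight at least $c_\beta\,\w(e)$, where $c_\beta:=(1-U_e)^{1/\beta}>0$. I will show that for all sufficiently small $\beta$ the membership of $e$ in $\mathfrak{W}_\beta$ is eventually constant and equal to the stated criterion. This yields both existence of the local limit, since $e$ is arbitrary and $E$ is countable, and its description.

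\textbf{Sufficiency.} Assume $C_{U_e}(x)\neq C_{U_e}(y)$ and that $C:=C_{U_e}(x)$ is light. Since $C$ is light, its vertices have weights tending to $0$. Hence only finitely many boundary edges $f\in\partial_E C$ have $\w(f)\ge \w(e)$. Each of these satisfies $U_f>U_e$ a.s., because it is closed in $G[U_e]$. By Lemma~\ref{order_alignment} applied to this finite set together with $e$, none of them enters the cone once $\beta$ is small enough. Every other boundary edge has $\w(f)<\w(e)$, so $r(f)>1$, and it has $U_f>U_e$; such an edge fails the cone inequality for every $\beta\ge0$. Consequently the cone component of $x$ is contained in $C$. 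By the observation above it also only uses vertices of $C$ of weight at least $c_\beta\w(e)$, and a light set contains only finitely many of these. Thus the cone component of $x$ is finite. In the same way it does not contain $y$, since $y\notin C$. By Lemma~\ref{lem:wired-characterization}, $e\in\mathfrak{W}_\beta$ for all small $\beta$.

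\textbf{Necessity, first case.} If $x$ and $y$ are connected in $G[U_e]$, fix a finite open path $H$ between them. Lemma~\ref{order_alignment} shows that for $\beta<A_{H\cup\{e\}}(\mvU)$ all of $H$ lies in $\mathrm{Cone}(e,\prec_\beta)$. Hence $e\notin\mathfrak{W}_\beta$.

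\textbf{Necessity, second case, and main obstacle.} The remaining case is when $C_{U_e}(x)$ and $C_{U_e}(y)$ are distinct and both heavy. I need each of $x$ and $y$ to lie in an infinite component of the cone for small $\beta$. If the cone happens to join them, that is also fine, since $e$ is then deleted either way. The obstacle is that the cone loses the low-weight edges of $G[U_e]$, so heaviness of a cluster alone is not obviously enough. My first attempt is to find, inside a heavy cluster, an infinite connected subgraph all of whose edges have weight at least $\w(e)$. Every such edge with $U_f<U_e$ lies in the cone for every $\beta\ge0$, because then $r(f)\le1$. Such a subgraph would then be joined to $x$ by a finite path handled by Lemma~\ref{order_alignment}. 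To produce it I would use the mass transport principle, Theorem~\ref{thm:MTP}, and the structure of heavy Bernoulli clusters: a heavy cluster should contain vertices of unbounded weight connected through vertices of weight bounded below, i.e.\ a $\w$-nonvanishing end reachable by an ``upward'' ray. If this fails in general, the fallback is a monotonicity argument in $p$. Since a.s.\ $U_e\ne p_h$, we have $U_e>p_h$, so $G[p]$ has only heavy infinite clusters for every $p$ slightly below $U_e$. I would then try to show that the cone contains such a subconfiguration restricted to edges with $\w(f)\ge c\,\w(e)$. Finally, I would note that a.s.\ no edge has $U_e$ equal to $p_h$ or to any other threshold, so the case analysis is exhaustive.
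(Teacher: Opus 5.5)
Your light case and your case $C_{U_e}(x)=C_{U_e}(y)$ match the paper's argument and are fine. The gap is the remaining case, where $C_{U_e}(x)\neq C_{U_e}(y)$ and both are heavy. You identify the obstacle correctly, but neither attempt is carried out, and the missing step is the only nontrivial one.

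What you need is an infinite connected subgraph $K_x\subseteq C_{U_e}(x)$ with two properties: (a) every edge of $K_x$ has $U_f<q$ for some fixed $q<U_e$, and (b) every vertex of $K_x$ has weight at least $c\,\w(e)$ for some fixed $c>0$. Given (a) and (b), the cone inequality $1-U_f>(\w(e)/\w(f))^\beta(1-U_e)$ holds for all $f\in K_x$ simultaneously once $c^{\beta}>(1-U_e)/(1-q)$. This uniformity over an infinite edge set is exactly what Lemma~\ref{order_alignment} cannot provide, since it only handles finite sets. Your fallback gestures at this computation.

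The existence of such a $K_x$ does not follow from heaviness of the cluster, and the mass transport principle gives no evident route to it either. A heavy set can have vertices of unbounded weight that are joined only through paths dipping to arbitrarily small weights. On a general nonunimodular transitive graph there is no canonical ``upward ray'' inside a Bernoulli cluster. Your first attempt asks for even more, namely weights $\ge \w(e)$ together with $U_f<U_e$, and gives no argument for it. Your fallback also omits a needed step: you observe that $G[p]$ has only heavy infinite clusters for $p\in(p_h,U_e)$, but you never show that $C_{U_e}(x)$ contains one of them.

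The paper closes this gap with two external inputs. Since a.s.\ $U_e\neq p_h$, we have $U_e>p_h$; pick $q\in(p_h,U_e)$.
\begin{enumerate}
\item By \cite{Haggstrom99}, each of $C_{U_e}(x)$ and $C_{U_e}(y)$ contains an infinite $q$-cluster. These are heavy, because $q>p_h$ and light infinite clusters cannot coexist with heavy ones.
\item By \cite[Theorem 5.5]{Timar06nonu}, a heavy cluster contains an infinite connected subgraph whose vertices lie in finitely many levels. This gives (b), while (a) holds because the subgraph lies in a $q$-cluster.
\end{enumerate}
A finite path in $C_{U_e}(x)$ from $x$ to $K_x$ is then handled by Lemma~\ref{order_alignment}, and the same is done for $y$. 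Without a structural result of this kind about heavy clusters, your proof of the ``only if'' direction is incomplete.
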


\begin{proof}
    Fix an edge $e=(x,y)$. Note that at $\beta=0$, $\mathrm{Cone}(e,\prec_0)=G[U_e],$  where the edge $e$ itself is absent due to our convention with the strict inequality in the definition of $G[p]$.

    Suppose first that $C_{U_e}(x)=C_{U_e}(y)$. Choose a finite path that connects $x$ and $y$ in $G[U_e]$. By Lemma~\ref{order_alignment} for every sufficiently small $\beta>0$, all edges on this path belong to $\mathrm{Cone}(e,\prec_\beta).$ Hence $x$ and $y$ are     connected in $\mathrm{Cone}(e,\prec_\beta)$, and
    Lemma~\ref{lem:wired-characterization} gives $e\notin\mathfrak{W}_\beta(G,\mvU).$
    
    Suppose next that $C_{U_e}(x)\neq C_{U_e}(y)$ and that, say,  $C_{U_e}(x)$ is light. Consider
    \[
        D:=\left\{e'\in\partial_E C_{U_e}(x): \w^o(e')>\w^o(e)\right\}.
    \]
    Since $C_{U_e}(x)$ is light and $G$ is locally finite the set $D$ must be finite. 
    If $f\in\partial_E C_{U_e}(x)\setminus(D\cup\{e\})$, then
    $U_f>U_e$ and $\w^o(f)\le \w^o(e)$, and consequently $X_{f,\beta}<X_{e,\beta}$ for every $\beta>0$.
    Finiteness of $D$ then yields that the same inequality holds for every $f\in D$ whenever $\beta>0$ is sufficiently small.
    Letting $C_{\prec_\beta}(x)$ denote the component of $x$ in$\mathrm{Cone}(e,\prec_\beta)$, we get that no edge of $\mathrm{Cone}(e,\prec_\beta)$ leaves $C_{U_e}(x)$, and $C_{\prec_\beta}(x)\subseteq C_{U_e}(x)$.
    Moreover, since random labels $U_f$ are in $[0,1]$ if $f\in\mathrm{Cone}(e,\prec_\beta)$, then 
    \[
    \w^o(f)>\w^o(e)(1-U_e)^{1/\beta}=:c_{e,\beta}>0.
    \]
    Therefore, apart from possibly $x$ itself, every vertex in $C_{\prec_\beta}(x)$ has weight greater than $c_{e,\beta}>0$. Since this component is contained in the light cluster $C_{U_e}(x)$, it must be finite. Finally, since it does not contain
    $y$, Lemma~\ref{lem:wired-characterization} gives $e\in\mathfrak{W}_\beta(G,\mvU)$
    for every sufficiently small $\beta>0$.

    It remains to consider the case in which $C_{U_e}(x)\neq C_{U_e}(y)$ and both are heavy. Since $U_e$ has a continuous distribution, a.s.\ $U_e\neq p_h(G,\Gamma)$, and hence $U_e>p_h(G,\Gamma).$ Choose $q\in(p_h(G,\Gamma),U_e)$. By \cite{Haggstrom99} each $C_{U_e}(x)$ and $C_{U_e}(y)$ must contain an infinite $q$-cluster. Clearly, these clusters are distinct and heavy. By \cite[Theorem 5.5]{Timar06nonu}, each of these clusters contains an infinite connected subgraph, denoted by $K_x$ and $K_y$ respectively, whose vertices belong to a finite union of levels. Note that $K_x$ and $K_y$ may not contain $x$ and $y$.
    
    Since $K_x$ lies in finitely many levels, there exists $c>0$ such that ${\w^o(e')}/{\w^o(e)}\ge c>0$ for every edge $e'\in K_x$. Moreover, every edge $e'\in K_x$ satisfies $U_{e'}<q<U_e$. Thus, recalling $\varphi_{e,e'}$ from \eqref{def:phi_ratio} we have
\[
    \varphi_{e,e'}(\beta) =\frac{X_{e',\beta}}{X_{e,\beta}}=\left(\frac{\w^o(e')}{\w^o(e)}\right)^\beta\frac{1-U_{e'}}{1-U_e}>c^\beta\frac{1-q}{1-U_e}.
\]
Since
\[
    \lim_{\beta\searrow0}c^\beta\frac{1-q}{1-U_e}=\frac{1-q}{1-U_e}>1,
\]
there exists $A_x>0$ such that $e\prec_\beta e'$ for every $e'\in K_x$ whenever $\beta<A_x$.
Now if $x\notin K_x$, choose a path in $C_{U_e}(x)$ that connects $x$ to $K_x$, by Lemma~\ref{order_alignment} for sufficiently small $\beta$, this path will be contained in the cluster $C_{\prec_\beta}(x)$. Thus for such $\beta$, $K_x\subseteq C_{\prec_\beta}(x)$ and therefore $C_{\prec_\beta}(x)$ infinite. Applying the same argument to $y$ and Lemma~\ref{lem:wired-characterization} yields that $e\notin\mathfrak{W}_\beta(G,\mvU)$ for every $\beta$ small enough.
\end{proof}

\begin{cor}\label{cor:W_zero_inclusions}
    Let $G$ be a locally finite connected graph, $\Gamma$ be a closed nonunimodular subgroup of $\Aut(G)$ that acts transitively on $G$, and $\w_\Gamma$ be as in \eqref{def:haarweights}. Then $\wmsf(G)\subsetneq \mathfrak{W}_{0+}(G,\mvU)$
\end{cor}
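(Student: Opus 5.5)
We need to show $\wmsf(G)\subseteq\mathfrak{W}_{0+}(G,\mvU)$ a.s., and that the inclusion is strict with positive probability (and in fact a.s.). Both parts follow from Proposition~\ref{prop:local_lim_W_small} together with the analogous characterization of $\wmsf$.

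\textbf{Inclusion.} By Lemma~\ref{lem:wired-characterization} applied to $\prec_0$, and since $\mathrm{Cone}(e,\prec_0)=G[U_e]$, an edge $e=(x,y)$ lies in $\wmsf$ if and only if $C_{U_e}(x)\neq C_{U_e}(y)$ and at least one of them is finite. A finite cluster is $\w_\Gamma$-light. Hence Proposition~\ref{prop:local_lim_W_small} gives $e\in\mathfrak{W}_{0+}$, so $\wmsf\subseteq\mathfrak{W}_{0+}$ a.s.

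\textbf{Strictness.} By the same two characterizations, the difference $\mathfrak{W}_{0+}\setminus\wmsf$ consists of the edges $e=(x,y)$ such that $C_{U_e}(x)$ and $C_{U_e}(y)$ are distinct and infinite, and at least one of them is light. I will produce such edges with positive probability.

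Fix $e$ and condition on $U_e=p\in(p_c,p_h)$, an interval that is nonempty by Hutchcroft's theorem $p_c<p_h$. The event has positive probability because $U_e$ is independent of the other labels. In $G[p]$, which excludes $e$, there are infinitely many infinite clusters and all of them are light (Figure~\ref{Fig:phases}). It then suffices to show that with positive probability both endpoints of $e$ lie in distinct infinite clusters of $G\setminus\{e\}$ restricted to $G[p]$.

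\textbf{Main obstacle.} The obstacle is arranging that $x$ and $y$ lie in distinct infinite clusters. I would use a standard insertion/deletion tolerance argument. With positive probability there are two distinct infinite clusters of $G[p]$ within a finite ball $B$ around $e$. Resample the finitely many labels in $B$, which costs only a positive factor. Open paths from $x$ to one of these clusters and from $y$ to the other, and close all other edges of $B$, so that the two clusters are not joined inside $B$. One must check that the closing does not merge or destroy the infiniteness of the clusters. Choose $B$ large enough that each cluster still has an infinite part outside $B$, reached by the opened paths, while all remaining connections through $B$ are closed.

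Since this event has positive probability, $\mathfrak{W}_{0+}\neq\wmsf$ with positive probability. By invariance and ergodicity of the i.i.d.\ labels, it then holds a.s.
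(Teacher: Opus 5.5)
Your inclusion argument matches the paper's. Your description of $\mathfrak{W}_{0+}\setminus\wmsf$ is also correct: it consists of the edges whose endpoints lie in distinct infinite $G[U_e]$-clusters, and these clusters are automatically light when $U_e<p_h$.

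\textbf{The gap is the routing step in the strictness argument.} To get ``$x$ and $y$ in distinct infinite clusters of $G[p]\setminus\{e\}$'' by changing labels in $B$, you need two vertex-disjoint paths in $B$, both avoiding $e$. One must run from $x$ to an infinite component $D_1$ of the configuration outside $B$, the other from $y$ to a different infinite component $D_2$; otherwise $x$ and $y$ end up in the same cluster. Your sketch does not produce such paths, and for the clusters you pick they need not exist. If $G$ is a tree (e.g.\ $T_{2,4}$, which satisfies the hypotheses) and both infinite clusters meeting $B$ lie on $y$'s side of $e$, every path from $x$ to them uses $e$. Enlarging $B$ does not fix this by itself. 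You would have to show that, with positive probability, the infinite pieces sit so that disjoint routes from $x$ and from $y$ exist. That is a nontrivial Menger-type statement combined with percolation input. It is also more than the corollary needs, because you are insisting on every fixed edge $e$.

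\textbf{The paper avoids this by not fixing $e$.} It takes orbit representatives $e_1,\dots,e_m$ and only needs $\sum_i\pr(A_i(p))>0$ for $p\in(p_c,p_h)$. This follows from insertion tolerance with no routing at all:
\begin{enumerate}
\item Let $(v_0,\dots,v_k)$ be a shortest path between two distinct infinite clusters $C_1\ni v_0$ and $C_2\ni v_k$. By countability, some fixed path plays this role with positive probability.
\item Open all edges of this path except $f=(v_0,v_1)$. No $v_i$ with $i\ge1$ lies in $C_1$, so $C_1$ is unaffected, while the cluster of $v_1$ now contains $C_2$.
\item Hence the endpoints of $f$ lie in distinct infinite clusters of $G[p]\setminus\{f\}$ with positive probability. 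By $\Gamma$-invariance the same holds for the representative of the orbit of $f$.
\end{enumerate}
From there, apply Fubini in $U_{e_i}$, which is the rigorous form of your ``condition on $U_e=p$''; it is valid because $A_i(p)$ depends only on $(U_f)_{f\neq e_i}$. Ergodicity then concludes exactly as in your last line.
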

\begin{proof}
    Fix $e=(x,y)$ and recall that by Lemma~\ref{lem:wired-characterization} $e\in\wmsf(G,\mvU)$ if and only if the $G[U_e]$-clusters of $x$ and $y$ are distinct and at least one of them is finite. Since every finite cluster is automatically light, by
    Proposition~\ref{prop:local_lim_W_small} we have $\wmsf(G,\mvU)\subseteq \mathfrak{W}_{0+}(G,\mvU).$

    To show that the inclusion is strict, first recall that by \cite{Hutchcroft20} $(p_c(G),p_h(G,\Gamma))\neq\emptyset.$
    Let $\{e_1,\ldots,e_m\}$ be representatives of the finitely many $\Gamma$-orbits of edges. For $e_i=(x_i,y_i)$ and $p\in[0,1]$, let $A_i(p)$ be the event that $x_i$ and $y_i$ lie in distinct infinite clusters of $G[p]\setminus\{e_i\}$, that is a configuration of Bernoulli$(p)$ percolation where $e_i$ is forced to be closed.

    For every $p\in (p_c(G),p_h(G,\Gamma))$, Bernoulli$(p)$ percolation has infinitely many infinite clusters, all of which are light \cite[Theorem 4.1.6]{Haggstrom99}. The usual insertion tolerance and mass-transport argument then gives there are infinitely many edges that connect two distinct light clusters and thus
    \[
        \sum_{i=1}^m\pr\bigl(A_i(p)\bigr)>0.
    \]
    Hence, for some $i$,
    \[
        \int_{p_c}^{p_h}\pr\bigl(A_i(p)\bigr)\,dp>0.
    \]

    The event $A_i(p)$ depends only on the labels $\{U_f:f\neq e_i\}$, while $U_{e_i}$ has $\mathrm{Uniform}[0,1]$ distribution
    and is independent of these labels. Therefore, by Fubini's theorem,
    \[
        \pr\bigl(U_{e_i}\in (p_c,p_h),\,A_i(U_{e_i})\bigr)=\int_{p_c}^{p_h}\pr\bigl(A_i(p)\bigr)\,dp>0.
    \]
    On this event, the two endpoint of $e_i$ in separate $U_{e_i}$-clusters, both of which are light and infinite. Therefore
    Proposition~\ref{prop:local_lim_W_small} and Lemma~\ref{lem:wired-characterization} imply that with positive probability
    \[
    e_i\in\mathfrak{W}_{0+}(G,\mvU)\setminus \wmsf(G,\mvU).
    \]
    Since the event that $\mathfrak{W}_{0+}(G,\mvU)$ and $\wmsf(G,\mvU)$ differ is $\Gamma$-invariant and these forests are factors of i.i.d.,\ in particular they are ergodic (e.g.\ see the proof of \cite[Proposition 4.8]{CTT22}), they differ almost surely.
\end{proof}

Before proceeding to the analysis of the large-$\beta$ limit for the $\beta$-Skewed Wired Maximal Spanning forest, we present the following lemma concerning the alignment of linear orders $\prec_{\beta}$ and $\prec_{\w}$ on any finite set of edges for sufficiently large $\beta$.

\begin{lem}\label{lem:ascending_ray_alignment}
     Let $G=(V,E)$ be a connected locally finite graph, $\w:V\to\bR^+$ be a weight function. Suppose $G$ contains a simple ray $(y,x,x_1,x_2,\ldots)$, such that $\w(x)=1$ and $\w(x_n)=r^n$ for some $r>1$ and every $n\ge 1$. Then there exists $B(\mvU)<\infty$ such that, for every $\beta>B(\mvU)$, $(x,y)\prec_\beta (x_n,x_{n+1})$ for every $n\ge 1$.
\end{lem}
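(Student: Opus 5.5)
The plan is to compare $X_{(x,y),\beta}$ with $X_{(x_n,x_{n+1}),\beta}$ directly, using the explicit weights. By definition $\w((x,y))=\min\{\w(x),\w(y)\}\le \w(x)=1$. Also $\w((x_n,x_{n+1}))=\min\{r^n,r^{n+1}\}=r^n$ for $n\ge1$. Hence
\[
X_{(x,y),\beta}\le 1-U_{(x,y)},\qquad X_{(x_n,x_{n+1}),\beta}=r^{n\beta}\bigl(1-U_{(x_n,x_{n+1})}\bigr).
\]
So it suffices to find $B(\mvU)<\infty$ such that, for every $\beta>B(\mvU)$ and every $n\ge1$,
\[
r^{n\beta}\bigl(1-U_{(x_n,x_{n+1})}\bigr)>1-U_{(x,y)}.
\]
The strict inequality also makes the tie-breaker irrelevant. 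We work on the a.s.\ event that all labels lie in $(0,1)$, so that $1-U_{(x,y)}<1$ and every $1-U_{(x_n,x_{n+1})}>0$.

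The main obstacle is uniformity in $n$, since infinitely many edges are involved and Lemma~\ref{order_alignment} only handles finite sets. The labels $1-U_{(x_n,x_{n+1})}$ may come arbitrarily close to $0$ along the ray, so we need some growth control. The i.i.d.\ uniform labels provide it through Borel--Cantelli. Since
\[
\sum_n \pr\bigl(1-U_{(x_n,x_{n+1})}<r^{-n/2}\bigr)=\sum_n r^{-n/2}<\infty,
\]
a.s.\ there is a random $N(\mvU)<\infty$ with $1-U_{(x_n,x_{n+1})}\ge r^{-n/2}$ for all $n\ge N$.

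For $n\ge N$ and $\beta\ge1$ we then have
\[
r^{n\beta}\bigl(1-U_{(x_n,x_{n+1})}\bigr)\ge r^{n(\beta-1/2)}\ge r^{1/2}>1>1-U_{(x,y)}.
\]
So every $\beta\ge1$ works simultaneously for all $n\ge N$.

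For the finitely many indices $1\le n<N$, each inequality holds once
\[
\beta>\frac{\log\bigl((1-U_{(x,y)})/(1-U_{(x_n,x_{n+1})})\bigr)}{n\log r}.
\]
This threshold is finite because all labels lie in $(0,1)$, and if a given inequality already holds for all $\beta$ the threshold is simply nonpositive. Equivalently, one can apply the argument of Lemma~\ref{order_alignment}, using that $r^{n\beta}\to\infty$ is increasing in $\beta$, to the finite edge set $\{(x,y)\}\cup\{(x_n,x_{n+1}):n<N\}$.

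Let $B(\mvU)$ be the maximum of $1$ and these finitely many thresholds. Then $B(\mvU)<\infty$ a.s., and for every $\beta>B(\mvU)$ the inequality holds for all $n\ge1$, which proves the lemma.
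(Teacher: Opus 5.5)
Your proposal is correct and follows essentially the same route as the paper: a Borel--Cantelli bound $1-U_{(x_n,x_{n+1})}\ge r^{-n/2}$ for all large $n$ handles the tail of the ray uniformly once $\beta$ is bounded below, and Lemma~\ref{order_alignment} (or your explicit thresholds) handles the finitely many remaining edges. The differences are only cosmetic; for example, you take $\beta\ge 1$ where the paper takes $\beta\ge 1/2$.
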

\begin{proof}
    Write $e=(x,y)$ and $e_n=(x_n,x_{n+1})$. Since $\w(x)=1$, we have $\w(e)\le 1$, while $\w(e_n)=r^n$. Consider the events $A_n:=\{1-U_{e_n}\le r^{-n/2}\}$. Since
    \[
        \sum_{n=1}^{\infty}\pr(A_n)=\sum_{n=1}^{\infty}r^{-n/2}<\infty,
    \]
    by the Borel--Cantelli lemma we have that, a.s.\ there exists $N<\infty$ such that $1-U_{e_n}>r^{-n/2}$ for every $n\ge N$.
    Hence, for every $\beta\ge 1/2$ and $n\ge N$,
    \[
        X_{e_n,\beta}=r^{n\beta}(1-U_{e_n})>1,
    \]
    whereas $X_{e,\beta}=\w(e)^\beta(1-U_e)\le 1.$ Thus $e\prec_\beta e_n$ for every $n\ge N$ and every $\beta\ge 1/2$.

    Finally, since $\w(e)\le 1<r^n=\w(e_n)$, Lemma~\ref{order_alignment} applied to the finite set $H=\{e,e_1,\ldots,e_{N-1}\}$
    gives $B_H(\mvU)<\infty$ such that $e\prec_\beta e_n$ for every $n\le N$ whenever $\beta>B_H(\mvU)$. Taking $B(\mvU)$ to be the maximum between $1/2$ and $B_H(\mvU)$ completes the proof.
\end{proof}

We are now ready to derive the large-$\beta$ limit of the $\beta$-Skewed Wired Maximal Spanning Forest on transitive graphs.

\begin{prop}\label{prop:local_lim_W_large}
    Let $G$ be a locally finite connected graph, $\Gamma$ be a closed subgroup of $\Aut(G)$ that acts transitively on $G$, and $\w_\Gamma$ be the $\Gamma$-invariant relative weight function as in \eqref{def:haarweights}. Then as $\beta\to \infty$ the local limit of $\mathfrak{W}_\beta(G,\mvU)$ exists and is given by $\wmax(G,\mvU)$.
\end{prop}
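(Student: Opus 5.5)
The plan is to work edge by edge with Lemma~\ref{lem:wired-characterization}, applied both to $\prec_\beta$ and to $\prec_\w$. Fix $e=(x,y)$. By that lemma, $e\in\wmax(G,\mvU)$ if and only if $x$ and $y$ lie in distinct components of $\mathrm{Cone}(e,\prec_\w)$, at least one of which is finite. The same characterization with $\mathrm{Cone}(e,\prec_\beta)$ describes $\mathfrak{W}_\beta$. So it suffices to show that a.s., for all sufficiently large $\beta$ (the threshold depending on $e$ and $\mvU$), the event $e\in\mathfrak{W}_\beta$ coincides with $e\in\wmax$. Local convergence then follows, since every finite edge set involves only finitely many thresholds. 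I would split into three cases according to the structure of $\mathrm{Cone}(e,\prec_\w)$ near $x$ and $y$.

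\textbf{Case 1: $x$ and $y$ are connected in $\mathrm{Cone}(e,\prec_\w)$.} Pick a finite connecting path $P$ and apply Lemma~\ref{order_alignment}\eqref{order_alignment_infty} to $H=P\cup\{e\}$. For $\beta>B_H(\mvU)$ every edge of $P$ is $\prec_\beta$-above $e$, so $x$ and $y$ are connected in $\mathrm{Cone}(e,\prec_\beta)$, and hence $e\notin\mathfrak{W}_\beta$.

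\textbf{Case 2: the $\mathrm{Cone}(e,\prec_\w)$-component $K_x$ of $x$ is finite and does not contain $y$.} By local finiteness, the edge set $E(K_x)$ and the edge boundary $\partial_E K_x$ are finite. Apply Lemma~\ref{order_alignment}\eqref{order_alignment_infty} to $H=E(K_x)\cup\partial_E K_x\cup\{e\}$. For large $\beta$, the edges of $K_x$ lie in $\mathrm{Cone}(e,\prec_\beta)$ and the boundary edges other than $e$ do not. Hence the $\mathrm{Cone}(e,\prec_\beta)$-component of $x$ is exactly $K_x$, which is finite and avoids $y$, and so $e\in\mathfrak{W}_\beta$.

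\textbf{Case 3: $x$ and $y$ lie in distinct infinite components $K_x,K_y$ of $\mathrm{Cone}(e,\prec_\w)$.} Here we need $e\notin\mathfrak{W}_\beta$ for large $\beta$. By Lemma~\ref{lem:wired-characterization} it is enough that the $\mathrm{Cone}(e,\prec_\beta)$-components of $x$ and $y$ are both infinite: if they merge, $e$ is on a cycle as its least edge; if they stay separate, it is least on a bi-infinite path. Note how membership depends on $\beta$. Edges $f$ with $\w(f)=\w(e)$ that are $\prec_\w$-above $e$ stay in $\mathrm{Cone}(e,\prec_\beta)$ for every $\beta$. Edges $f$ with $\w(f)>\w(e)$ enter monotonically as $\beta$ grows, because $\varphi_{e,f}$ is increasing. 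Edges with smaller weight can only add connectivity. So it suffices to find an infinite subgraph $R_x\subseteq K_x$, connected to $x$ by a finite path in $K_x$, all of whose edges lie in $\mathrm{Cone}(e,\prec_\beta)$ simultaneously for all large $\beta$. Such an $R_x$ yields the claim, because the finite path is handled by Lemma~\ref{order_alignment}\eqref{order_alignment_infty}. I would look for $R_x$ of one of two types.
\begin{enumerate}
\item An infinite connected subgraph of edges of weight exactly $\w(e)$ in $K_x$. These edges are in the cone for every $\beta$, so this type works immediately.
\item Otherwise, a simple ray in $K_x$ along which the edge weights grow at least geometrically. For this type I would run the Borel--Cantelli argument of Lemma~\ref{lem:ascending_ray_alignment}, using the events $1-U_{e_n}\le(\w(e)/\w(e_n))^{1/2}$, which are summable under geometric growth.
\end{enumerate}

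The main obstacle is producing $R_x$ of the second type. The worry is a ray whose weights stay only slightly above $\w(e)$. On such a ray the ratios $\w(e_n)/\w(e)$ are close to $1$, the required threshold for $\beta$ is not uniform in $n$, and an infinite ray could fail to lie in the cone at any finite $\beta$. The difficulty is that the weight values of a transitive graph need not be discrete. I would first try to use transitivity: the ratios of weights of neighbouring vertices take only finitely many values, so any weight level above $\w(e)$ can be exceeded by a definite factor within a bounded number of steps. The aim is to extract, from $K_x$ (all of whose vertices except possibly $x$ have weight $\ge\w(e)$), either an infinite connected piece within a bounded window of levels or a ray that climbs geometrically. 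For the bounded-window alternative I would try an argument in the spirit of \cite[Theorem 5.5]{Timar06nonu}, combined with a compactness (K\"onig) argument. On a bounded window the ratios above $1$ are bounded below by some $c>1$, with the exact level $\w(e)$ itself treated as in the first type, so Borel--Cantelli together with Lemma~\ref{order_alignment} would again give a uniform threshold for $\beta$.
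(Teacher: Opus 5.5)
Your Cases 1 and 2 match the paper's argument, and so does your reduction of Case 3 to finding an infinite subgraph $R_x\subseteq K_x$ that lies in $\mathrm{Cone}(e,\prec_\beta)$ for all large $\beta$. However, Case 3 is not actually proved. You leave the existence of $R_x$ open, and the bounded-window fallback you sketch would fail.

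On a window of levels strictly above $\w(e)$, the ratios $\w(f)/\w(e)$ lie in some $[c,C]$ with $c>1$, but this makes nothing summable. Such edges belong to $\mathrm{Cone}(e,\prec_\w)$ irrespective of their labels. So for a fixed infinite path in the window, the labels along it are i.i.d.\ given $U_e$, and each edge fails to be in $\mathrm{Cone}(e,\prec_\beta)$ with probability at least $C^{-\beta}(1-U_e)$. Hence, for every fixed $\beta$, a.s.\ infinitely many of its edges are missing. There is no uniform threshold in $\beta$. Whether some infinite piece survives would be a genuine percolation question, which you do not address. Borel--Cantelli only works when the ratios tend to infinity.

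The missing observation is that you do not need to search inside $K_x$ at all; you can choose the ray. Every vertex of $K_x$ has weight at least $\w(e)$. If all of them have weight exactly $\w(e)$, then all edges of $K_x$ lie in the cone for every $\beta$ by the tie-breaker, so take $R_x=K_x$ (your type (1)). Otherwise pick $u\in K_x$ with $\w(u)>\w(e)$. The weights are then nonconstant, so some edge has endpoint ratio $r>1$, and by transitivity every vertex has a neighbour of relative weight $r$. This gives a deterministic ray $u=v_0,v_1,\ldots$ with $\w(v_n)=r^n\w(u)$.

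Each edge $(v_n,v_{n+1})$ has weight $\w(v_n)\ge\w(u)>\w(e)$, so it lies in $\mathrm{Cone}(e,\prec_\w)$ regardless of labels. Being attached to $u$, the whole ray lies in $K_x$, and hence is disjoint from $K_y$. It is simple because the weights strictly increase. Its weights grow geometrically however close $\w(u)$ is to $\w(e)$. So Lemma~\ref{lem:ascending_ray_alignment} handles its tail, and Lemma~\ref{order_alignment} handles the finite path from $x$ to $u$ together with the initial segment of the ray.

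This is exactly the paper's route. It makes your concern about non-discrete weight values and rays hovering just above $\w(e)$ irrelevant, and it needs no appeal to \cite[Theorem 5.5]{Timar06nonu} or K\"onig's lemma.
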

\begin{proof}

First we show that $e=(x,y)\in\wmax(G,\mvU)$ then it is in $\mathfrak{W}_\beta(G,\mvU)$ for every sufficiently large $\beta$. By Lemma~\ref{lem:wired-characterization} if $e=(x,y)\in\wmax(G,\mvU)$ then $x$ and $y$ lie in different connected components of $\mathrm{Cone}(e,\prec_{\w})$ one of which is finite. Assume the finite component is the one containing $x$ and denote it by $C_{\prec_{\w}}(x)$. Note that every edge $e'\in \partial_EC_{\prec_{\w}}(x)\setminus\{e\}$ satisfies $e'\prec_{\w} e$. By local finiteness, there are finitely many such edges $e'$, and hence, by Lemma~\ref{order_alignment} all of them also satisfy $e'\prec_{\beta} e$ for $\beta$ large enough. Therefore for every such $\beta$, the component of $\mathrm{Cone}(e,\prec_{\beta})$ that contains $x$ is finite and does not contain $y$. Lemma~\ref{lem:wired-characterization} now concludes the desired inclusion.

Towards the other inclusion, fix an edge $e=(x,y)\notin\wmax(G,\mvU)$. By Lemma~\ref{lem:wired-characterization}, either $x$ and $y$ are connected in $\mathrm{Cone}(e,\prec_{\w})$, or their respective components in $\mathrm{Cone}(e,\prec_{\w})$ are both infinite.

Suppose first that $x$ and $y$ are connected in $\mathrm{Cone}(e,\prec_{\w})$. Choose a finite path connecting $x$ and $y$ inside $\mathrm{Cone}(e,\prec_{\w})$. By Lemma~\ref{order_alignment}, for all sufficiently large $\beta$, every edge $e'$ in this path is larger than $e$ with respect to $\prec_\beta$. Hence $x$ and $y$ are connected in $\mathrm{Cone}(e,\prec_\beta)$, and therefore $e\notin\mathfrak{W}_\beta(G,\mvU).$

Suppose now that the components $C_{\prec_{\w}}(x)$ and $C_{\prec_{\w}}(y)$ of $x$ and $y$ in $\mathrm{Cone}(e,\prec_{\w})$ are both infinite. We will show that, the respective components $C_{\prec_{\beta}}(\cdot)$ of $x$ and $y$ in $\mathrm{Cone}(e,\prec_\beta)$ are also infinite for every sufficiently large $\beta$.

Note that every vertex of $C_{\prec_{\w}}(x)$ has weight at least $\w^x(e)$. If every vertex in this component has weight exactly $\w^x(e)$, then for every edge $e'\in C_{\prec_{\w}}(x)$, the fact that $e\prec_\w e' $ was determined only by the tie-breaker rule and hence $e\prec_\beta e'$ for every $\beta\ge0$. In particular, $C_{\prec_{\beta}}(x)$ is infinite.
Otherwise, choose a finite simple path $C_{\prec_{\w}}(x)$ that connects $x$ to the closest vertex of a larger weight; that is  $(x,x_1,\ldots,x_m,v_1)\subset C_{\prec_{\w}}(x)$ such that $\w^x(x_i)=1$ for all $i\le m$ and $\w^x(v_1)=r>1$. By transitivity, every vertex has a neighbor in $G$ whose relative weight is equal to $r$. Iterating from $v_1$, we deterministically obtain a simple ray $(x_{m},v_1,v_2,v_3,\ldots)$ such that $\w^x(v_n)=r^{n}$. The ray is simple since its weights strictly increase, and it does not contain any of the vertices from $\{x,x_1,\ldots,x_{m}\}$ by the choice of $v$.

Lemma~\ref{lem:ascending_ray_alignment} gives that $(x_{m},v_1)\prec_\beta (v_n,v_{n+1})$ for every $n\ge 1$ and every sufficiently large $\beta$. On the other hand, Lemma~\ref{order_alignment}, applied to the finite set consisting of $e$ and the edges along the path $(x,x_1,\ldots,x_m,v_1)$ yields $e\prec_\beta e'$ for every edge $e'$ that belongs to this path, again, for every $\beta$ that is sufficiently large. In particular, for such $\beta$ the component of $x$ in $\mathrm{Cone}(e,\prec_\beta)$ contains an infinite ray $(x,x_1,\ldots,x_m,v_1,v_2,\ldots)$ and hence infinite. Note that this ray is also in $C_{\prec_{\w}}(x)$ and hence is disjoint from $C_{\prec_{\w}}(y)$.

Applying this argument to both $y$, we find that, for every sufficiently large $\beta$, either $x$ and $y$ are connected in $\mathrm{Cone}(e,\prec_\beta)$ or their two components are both infinite. Lemma~\ref{lem:wired-characterization} then implies that     $e\notin\mathfrak{W}_\beta(G,\mvU).$
\end{proof}

\begin{proof}[Proof of Theorem~\ref{thm:local_lim_W}]
The statement follows from combination of Propositions~\ref{prop:local_lim_W_small} and~\ref{prop:local_lim_W_large}.
\end{proof}

%%%%%%%%%%%%%%%%%%%%%%%%%%%%%%%%%%%%%%%%%%%%%%%%%%%%%%%%%%
\section{Local limits of $\mathfrak{F}_\beta$}\label{sec:loc_lim_F}
%%%%%%%%%%%%%%%%%%%%%%%%%%%%%%%%%%%%%%%%%%%%%%%%%%%%%%%%%%

We first note an analog of Lemma~\ref{lem:wired-characterization} for the free maximal spanning forests.

\begin{lem}\label{lem:free-characterization}
 Let $G=(V,E)$ be a connected locally finite graph and $\prec$ be a linear order on $E$. Suppose $\mathfrak{F}_{\prec}$ is the free $\prec$-maximal spanning forest, that is a random spanning forest acquired by deleting the $\prec$-least edge in each cycle.
 Then $e=(x,y)\in\mathfrak{F}_{\prec}$ if and only if $x$ and $y$ lie in different connected components of $\mathrm{Cone}(e,\prec)$.
\end{lem}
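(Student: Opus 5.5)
The plan is to argue both directions directly from the definition of $\mathfrak{F}_{\prec}$. By definition, $e$ is deleted exactly when some finite simple cycle of $G$ contains $e$ and $e$ is the $\prec$-least edge on it. So it suffices to show that such a cycle exists if and only if $x$ and $y$ lie in the same connected component of $\mathrm{Cone}(e,\prec)$. Since $\prec$ is a linear order, "least" is unambiguous.

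First I will treat the case where $x$ and $y$ lie in the same component of $\mathrm{Cone}(e,\prec)$. Choose a simple path $P$ from $x$ to $y$ inside this component. Every edge $e'$ of $P$ satisfies $e\prec e'$, so in particular $e\notin P$, because $e\prec e$ is false. Then $P\cup\{e\}$ is a finite simple cycle, and $e$ is its strictly $\prec$-least edge. The free forest deletes the least edge of every finite cycle, so $e\notin\mathfrak{F}_{\prec}$.

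Next I will treat the converse. Suppose $e\notin\mathfrak{F}_{\prec}$. Then there is a finite cycle $Z$ in which $e$ is the $\prec$-least edge. We may take $Z$ simple: if a non-simple closed walk were allowed, we would extract the simple cycle through $e$, whose edges form a subset of $Z$, so $e$ is still least on it. Then $Z\setminus\{e\}$ is a path from $x$ to $y$ all of whose edges $e'$ satisfy $e\prec e'$, that is, all lie in $\mathrm{Cone}(e,\prec)$. Hence $x$ and $y$ are in the same component of the cone.

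Combining the two directions gives the equivalence. There is no real obstacle here. The only care needed is to note that the cone excludes $e$ itself, which follows from the strict inequality and mirrors the convention for $G[p]$, and to note that cycles can be taken simple. This is precisely the finite-cycle half of Lemma~\ref{lem:wired-characterization}, with the bi-infinite path case dropped.
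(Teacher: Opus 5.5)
Your proposal is correct and follows the same argument as the paper: $e$ is the $\prec$-least edge on some finite cycle if and only if $x$ and $y$ are joined by a path in $\mathrm{Cone}(e,\prec)$. You simply write out both directions of that observation, which the paper states in one line.
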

\begin{proof}
    The statement follows from the observation that $e=(x,y)$ is the smallest with respect to $\prec$ on some cycle if and only if $x$ and $y$ are connected by a path in $\mathrm{Cone}(e,\prec)$.
\end{proof}

\begin{lem}\label{F_beta_exclusion}
    Let $G=(V,E)$ be a connected locally finite graph and
    $\w:V\to\bR^+$ be a weight function. Then the following hold.
    \begin{enumerate}
        \item If $e\notin \fmsf(G,\mvU)$, then there exists
        $A_e(\mvU)>0$ such that $e\notin\mathfrak{F}_\beta(G,\mvU)$ for every $0<\beta<A_e(\mvU)$.
        \item If $e\notin\fmax(G,\mvU)$, then there exists
        $B_e(\mvU)<\infty$ such that $ e\notin\mathfrak{F}_\beta(G,\mvU)$ for every $\beta>B_e(\mvU)$.
    \end{enumerate}
\end{lem}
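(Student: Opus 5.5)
The plan is to combine the free characterization in Lemma~\ref{lem:free-characterization} with the finite order alignment of Lemma~\ref{order_alignment}. Both parts follow the same scheme; in each case the witness for deletion is a single finite cycle.

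For part (1), suppose $e=(x,y)\notin\fmsf(G,\mvU)=\mathfrak{F}_0(G,\mvU)$. By Lemma~\ref{lem:free-characterization} applied to $\prec_0$, the vertices $x$ and $y$ are connected in $\mathrm{Cone}(e,\prec_0)$. Local finiteness is not even needed here: a connection in a graph is realized by a finite path $P$, so $e\prec_0 e'$ for every edge $e'\in P$. Set $H:=P\cup\{e\}$, a finite set of edges. Lemma~\ref{order_alignment}\eqref{order_alignment_0} gives $A_H(\mvU)>0$ such that $\prec_\beta$ and $\prec_0$ agree on $H$ for every $\beta<A_H(\mvU)$. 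For such $\beta$ we therefore have $e\prec_\beta e'$ for all $e'\in P$, so $P\subseteq\mathrm{Cone}(e,\prec_\beta)$. By Lemma~\ref{lem:free-characterization} (equivalently, $e$ is the $\prec_\beta$-least edge of the cycle $P\cup\{e\}$), $e\notin\mathfrak{F}_\beta(G,\mvU)$. Taking $A_e(\mvU):=A_H(\mvU)$ completes part (1).

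Part (2) is identical with $\prec_\w$ in place of $\prec_0$. If $e\notin\fmax(G,\mvU)$, there is a finite path $P$ from $x$ to $y$ in $\mathrm{Cone}(e,\prec_\w)$. Lemma~\ref{order_alignment}\eqref{order_alignment_infty} on $H=P\cup\{e\}$ yields $B_H(\mvU)<\infty$ such that $\prec_\beta$ agrees with $\prec_\w$ on $H$ for every $\beta>B_H(\mvU)$. Hence $e$ is $\prec_\beta$-least on the cycle $P\cup\{e\}$, and we may set $B_e(\mvU):=B_H(\mvU)$.

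There is essentially no obstacle here. The only care needed is to ensure that ties do not cause trouble: the orders are a.s. linear simultaneously for all $\beta$ because of the tie-breaker. We should also note that the path $P$ is chosen depending on $\mvU$ only through the fixed configuration, so the thresholds are random but finite (resp. positive) a.s., which is all the statement asks.
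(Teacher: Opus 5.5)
Your proposal is correct and is essentially the paper's proof: Lemma~\ref{lem:free-characterization} supplies a finite path from $x$ to $y$ in $\mathrm{Cone}(e,\prec_0)$ (resp.\ $\mathrm{Cone}(e,\prec_{\w})$), and Lemma~\ref{order_alignment} applied to a finite edge set moves that path into $\mathrm{Cone}(e,\prec_\beta)$ for small (resp.\ large) $\beta$. Your choice $H=P\cup\{e\}$ is slightly more careful than the paper. The paper only says the orders agree ``on this path'', but what is actually needed is each edge's comparison with $e$ itself.
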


\begin{proof}
    Both claims follow from the same argument.
    Fix $e=(x,y)$. Suppose first that $e\notin \fmsf(G,\mvU)$ (resp.\ $e\notin\fmax(G,\mvU)$). By Lemma~\ref{lem:free-characterization}, there exists a finite path connecting $x$ and $y$ inside $\mathrm{Cone}(e,\prec_0)$ (resp.\ $\mathrm{Cone}(e,\prec_{\w})$). By Lemma~\ref{order_alignment}, the orders $\prec_\beta$ and
    $\prec_0$ (resp.~$\prec_{\w}$) agree on this path for every sufficiently small $\beta>0$ (resp.\ sufficiently large $\beta$), by Lemma~\ref{lem:free-characterization} $e\notin\mathfrak{F}_\beta(G,\mvU).$ 
\end{proof}

\begin{proof}[Proof of Theorem~\ref{thm:local_lim_F_large_discr}]
    In light of Lemma~\ref{F_beta_exclusion} it remains to show that if $e\in\fmax(G,\mvU),$ then for every sufficiently large $\beta$ we have $e\in\mathfrak{F}_\beta(G,\mvU)$.
    
    Choose $\beta$ such that $\gl^{-\beta}<1-U_e$. We claim that, for every such $\beta$,
    \[
        \mathrm{Cone}(e,\prec_\beta)\subseteq\mathrm{Cone}(e,\prec_{\w}).
    \]
    Indeed, suppose that $e'\prec_{\w}e$. If $\w^o(e')=\w^o(e)$, then $1-U_{e'}<1-U_e$, and hence $e'\prec_\beta e$ for every $\beta\ge0$.

    If $\w^o(e')<\w^o(e)$, then, since $\w$ takes values in $\gl^{\bZ}$,
    \[
        \frac{\w^o(e')}{\w^o(e)}\le\frac{1}{\gl}.
    \]
    Therefore, by our choice of $\beta$ we have
    \[
        X_{e',\beta}=\w^o(e')^\beta(1-U_{e'})\le\w^o(e)^\beta\gl^{-\beta}<      \w^o(e)^\beta(1-U_e)=X_{e,\beta}.
    \]
    Thus $e'\prec_\beta e$, proving the claimed cone inclusion.

    Since $e\in\fmax(G,\mvU)$, Lemma~\ref{lem:free-characterization}
    implies that the endpoints of $e$ lie in different components of
    $\mathrm{Cone}(e,\prec_{\w})$. They therefore also lie in different
    components of the subgraph
    $\mathrm{Cone}(e,\prec_\beta)$, and hence $e\in\mathfrak{F}_\beta(G,\mvU).$
\end{proof}

\begin{proof}[Proof of Theorem~\ref{thm:local_lim_F}] 
    The statement follows immediately from the following observations:
    \begin{enumerate}
        \item  if $ e\in\mathfrak{W}_{0+}(G,\mvU),$ then for every sufficiently small $\beta>0$ $e\in\mathfrak{F}_\beta(G,\mvU)$,
        \item  if $e\notin \fmsf(G,\mvU),$ then for every sufficiently small $\beta>0$ $e\notin\mathfrak{F}_\beta(G,\mvU)$,
        \item  if $e\in\wmax(G,\mvU),$ then for every sufficiently large $\beta$ $e\in\mathfrak{F}_\beta(G,\mvU)$,
        \item  if $e\notin\fmax(G,\mvU),$ then for every sufficiently large $\beta$ $e\notin\mathfrak{F}_\beta(G,\mvU)$.
    \end{enumerate}
    To see these implications note that for every $\beta>0$ $\mathfrak{W}_\beta(G,\mvU)\subseteq\mathfrak{F}_\beta(G,\mvU)$. Thus items~(1) and~(3) follow from the local convergence of $\mathfrak{W}_\beta(G,\mvU)$ established in Theorem~\ref{thm:local_lim_W}, respectively. Items~(2) and~(4) follow from Lemma~\ref{F_beta_exclusion}.
\end{proof}

%%%%%%%%%%%%%%%%%%%%%%%%%%%%%%%%%%%%%%%%%%%%%%%%%%%%%%%%%%
\section{Characterization of the heavy-cluster nonuniqueness phase}\label{sec:char_ph_pu}
%%%%%%%%%%%%%%%%%%%%%%%%%%%%%%%%%%%%%%%%%%%%%%%%%%%%%%%%%%
Before proving Proposition~\ref{prop:amen_F=W} and Theorem~\ref{thm:ph_vs_pu} we establish a statement analogous to Proposition~\ref{prop:F_prop}.\eqref{main_prop_heavy} for $\mathfrak{W}_\beta(G)$.

\begin{lem}\label{lem:W_nonvan}
Let $G=(V,E)$ be a connected locally finite graph and $\w:V\to\bR^+$ be a weight function. For any $\beta>0$ assume that $\mathfrak{W}_\beta(G)$ is disconnected, then it contains only $\w$-nonvanishing trees a.s.
Similarly, assume $\wmax(G)$ is disconnected, then it also contains only $\w$-nonvanishing trees a.s.
\end{lem}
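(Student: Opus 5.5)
We argue by contradiction, using the characterization of wired membership in Lemma~\ref{lem:wired-characterization}. Restrict to the almost sure event on which the values $X_{e,\beta}$ are positive and pairwise distinct, so that $\prec_\beta$ is a linear order. Suppose $\mathfrak{W}_\beta(G)$ is disconnected and let $T$ be one of its trees. Since $G$ is connected, the edge boundary $\partial_E T$ is nonempty. Every $f\in\partial_E T$ is absent from $\mathfrak{W}_\beta(G)$, so by Lemma~\ref{lem:wired-characterization} either its endpoints are joined in $\mathrm{Cone}(f,\prec_\beta)$, or its two endpoints lie in distinct infinite components of that cone. We assume, towards a contradiction, that $T$ is $\w$-vanishing. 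We may also assume $T$ is infinite: a finite tree $T$ would force an edge of $\partial_E T$ to survive by the cut argument of Lemma~\ref{lem:cut_set}, since the $\prec_\beta$-largest edge of the finite boundary cannot be least on a cycle or on a bi-infinite path through the finite set $T$.

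The key step is to locate a boundary edge of $T$ that must survive. Since $T$ is vanishing and $X_{f,\beta}\le \w(f)^\beta$, for any fixed $f_0\in\partial_E T$ only finitely many $f\in\partial_E T$ satisfy $X_{f,\beta}\ge X_{f_0,\beta}$. This uses local finiteness together with the fact that $\w(f)\le \w$ of the endpoint of $f$ in $T$. Hence $\partial_E T$ has a $\prec_\beta$-largest element $f_*=(u,v)$ with $u\in T$.

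Next consider the component $K$ of $u$ in $\mathrm{Cone}(f_*,\prec_\beta)$. An edge of $\partial_E T$ other than $f_*$ is $\prec_\beta$-smaller than $f_*$, so it does not lie in the cone. Therefore $K\subseteq T$, and $K$ does not contain $v$. To contradict Lemma~\ref{lem:wired-characterization}, it remains to show that $K$ is finite. Every edge $g$ of $K$ satisfies $g\succ_\beta f_*$, so
\[
\w(g)^\beta\ge X_{g,\beta}>X_{f_*,\beta}>0.
\]
Thus all vertices of $K$ other than $u$ have weight bounded below by $X_{f_*,\beta}^{1/\beta}$. Because $K\subseteq T$ and $T$ is $\w$-vanishing, only finitely many vertices of $T$ exceed any positive weight threshold, so $K$ is finite. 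Hence $f_*\in\mathfrak{W}_\beta(G)$. This contradicts $f_*\in\partial_E T$, since a surviving boundary edge would join $T$ to another vertex of the forest. So every tree of $\mathfrak{W}_\beta(G)$ is $\w$-nonvanishing. The same computation, with $\beta>0$ essential for the lower bound on weights in the cone, is exactly the light-cluster step of Proposition~\ref{prop:local_lim_W_small}.

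For $\wmax(G)$ the same scheme applies with $\prec_{\w}$ in place of $\prec_\beta$. The step I expect to be the main obstacle is the existence of a $\prec_{\w}$-largest boundary edge. Here $\w$ takes values in a discrete set, $\w_\Gamma(V)$ being countable with accumulation only at $0$ along a vanishing set. Since $T$ is vanishing, $\sup_{f\in\partial_E T}\w(f)$ is attained by only finitely many edges, and among those the labels $U$ select a unique maximum $f_*$. The cone component $K$ of $u$ then lies in $T$ and consists of edges of weight at least $\w(f_*)>0$, so it is finite by vanishing, and we conclude as before.
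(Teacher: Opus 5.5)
Your proof is correct and follows the paper's argument. You take the unique largest edge (under $\prec_\beta$, resp.\ $\prec_{\w}$) of the $\w$-vanishing boundary $\partial_E T$, note that its cone component on the $T$-side lies inside $T$ with weights bounded below and is therefore finite, and conclude via Lemma~\ref{lem:wired-characterization}; the separate finite-$T$ case is redundant, and the claim that $\w$ takes values in a discrete set is not justified for a general weight function, but you never use it, since vanishing of $T$ alone ensures $\sup_{f\in\partial_E T}\w(f)$ is attained by finitely many edges.
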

\begin{proof}
Restrict to the probability one event on which the random variables $\{X_{e,\beta}:e\in E\}$ are positive and pairwise distinct. 
Suppose, to the contrary, that $T$ is a proper $\w$-vanishing component of $\mathfrak{W}_\beta(G)$. Then by local finiteness, $\partial_E T$ is also $\w$-vanishing. In particular $\partial_E T$ admits a unique $\prec_\beta$-largest edge; denote this edge by $e=(x,y)$, such that $x\in T$ and $y\notin T$. It follows that, no edge of $\mathrm{Cone}(e,\prec_\beta)$ crosses the boundary of $T$, and hence the component of $x$ in $\mathrm{Cone}(e,\prec_\beta)$ is contained in $T$.

For every edge $e'\in\mathrm{Cone}(e,\prec_\beta)$ satisfies 
\[
\w(e')\ge\left(X_{e',\beta}\right)^{\frac{1}{\beta}}>\left(X_{e,\beta}\right)^{\frac{1}{\beta}}>0.
\]
Since this component is contained in the $\w$-vanishing set $T$, it must be finite. The endpoints of $e$ therefore lie in distinct components of $\mathrm{Cone}(e,\prec_\beta)$, one of which is finite. Lemma~\ref{lem:wired-characterization} then yields that $e\in\mathfrak{W}_\beta(G),$ contradicting $e\in\partial_E T$. 

The statement for $\wmax(G)$ follows from the same argument, where $e'\in\mathrm{Cone}(e,\prec_\w)$ immediately implies $\w(e')\ge\w(e)$.
\end{proof}

\begin{proof}[Proof of Proposition~\ref{prop:amen_F=W}]
    The same argument works for both types of forests, so let $\mathfrak{F}$ denote $\fmax(G)$ (resp.\ $\mathfrak{F}_\beta(G)$) and $\mathfrak{W}$ denote $\wmax(G)$ (resp.\ $\mathfrak{W}_\beta(G)$).

    Assume $\mathfrak{F}\neq\mathfrak{W}$, by Lemma~\ref{lem:W_nonvan} the forest $\mathfrak{W}$ is disconnected and contains only $\w_\Gamma$-nonvanishing components a.s. By local finiteness each tree of $\mathfrak{W}$ contains a $\w_\Gamma$-nonvanishing end. By the MTP, every tree of $\mathcal{F}$ that contains an edge in $\mathfrak{F}\setminus\mathfrak{W}$ must contain infinitely such edges and hence have infinitely many $\w_\Gamma$-nonvanishing ends. By \cite[Theorem 5.3]{TT25} $\mathfrak{F}$ is not hyperfinite (in the sense of \cite[Definition 4.1]{TT25}), and thus $G$ is also not hyperfinite. The conclusion now follows from \cite[Theorem 1.4]{TT25}.
\end{proof}

\begin{proof}[Proof of Theorem~\ref{thm:ph_vs_pu}]
    By Lemmas~\ref{lem:wired-characterization} and~\ref{lem:free-characterization}, together with Proposition~\ref{prop:local_lim_W_small}, for every edge $e=(x,y)$ we have $e\in\fmsf(G,\mvU)\setminus\mathfrak{W}_{0+}(G,\mvU)$ if and only if the $G[U_e]$-clusters of $x$ and $y$ are distinct and both heavy.
    Repeating the orbit-representative and Fubini argument from the proof of Corollary~\ref{cor:W_zero_inclusions}, such an edge exists with positive probability if and only if the set of $p\in[0,1]$ for which Bernoulli$(p)$ percolation has infinitely many heavy clusters has positive measure. Hence
    \[
        \pr\bigl(\fmsf(G,\mvU)\neq\mathfrak{W}_{0+}(G,\mvU)\bigr)>0\qquad\mbox{if and only if}\qquad p_h(G,\Gamma)<p_u(G).
    \]
    Finally, the event that the two forests differ is $\Gamma$-invariant, and hence has probability either zero or one by ergodicity.
\end{proof}

\begin{proof}[Proof of Theorem~\ref{thm:prod}]
    In the case when $d=2$, $G$ is $\w$-amenable (for example by \cite[Theorem 1.4]{TT25}) and thus the conclusion follows from Proposition~\ref{prop:amen_F=W}.

    Assume $d\ge3$ and fix $\beta>0$. Since $\mathfrak{W}_\beta(G,\mvU)\subseteq\mathfrak{F}_\beta(G,\mvU)$, it suffices to show that, for every edge $e=(x,y)$,
    \begin{equation}\label{eq:e_in_F_not_W}
        \pr\bigl(e\in\mathfrak{F}_\beta(G,\mvU)\setminus\mathfrak{W}_\beta(G,\mvU)\bigr)=0.
    \end{equation}
    Without loss of generality assume that $\w^{x}(y)\ge 1$ and thus $\w^x(x)=\w^x(e)=1$.

    Starting from an arbitrary vertex $v$, iteratively taking the unique neighbor whose relative weight is $2$ gives a ray    $(v=v_0,v_1,v_2,\ldots)$. Write $f_n=(v_n,v_{n+1})$, since $\w^v(f_n)=2^n$, we have
    \[
        \sum_{n=0}^{\infty}\pr(X_{f_n,\beta}\le X_{e,\beta})\le\sum_{n=0}^{\infty}\min\left\{1,\w^x(v)^{-\beta}2^{-n\beta}\right\}<\infty.
    \]
    By the Borel--Cantelli lemma, a tail of this ray belongs to
    $\mathrm{Cone}(e,\prec_\beta)$.
    Moreover, the tails of all such rays belong to the same component of $\mathrm{Cone}(e,\prec_\beta)$. Indeed, the projections of their first-coordinates are upward paths in $\mathrm{GP}(2)$ and thus eventually coincide. Starting from that point their second coordinates are connected by disjoint by copies of a fixed finite path in $T_d$. The weights of edges along these translates grow by a factor of $2$, so the same Borel--Cantelli argument shows that these connecting paths belong to $\mathrm{Cone}(e,\prec_\beta)$ at all sufficiently high weight-levels. By countability, this holds simultaneously for all pairs of starting vertices. Denote the common component containing these tails by $C_{\mathrm{tail}}(\beta)$.

    Next, we claim that every infinite component of $\mathrm{Cone}(e,\prec_\beta)$ is equal to $C_{\mathrm{tail}}(\beta)$.  Condition on $U_e$, and let 
    \[
    a:=\min\{a\in2^{\mathbb Z}:a^\beta>1-U_e\}.
    \]
    
    Every edge of $\mathrm{Cone}(e,\prec_\beta)$ has both endpoints with weight at least $a$ with respect to $x$. Furthermore, conditional on $U_e$, for any vertex of weight at least $a$ whose upward ray does not contain $e$, the probability that its entire upward ray belongs to the cone is at least
    \[
        c:=\prod_{n=0}^{\infty}\left(1-\frac{1-U_e}{a^\beta2^{n\beta}}\right)>0,
    \]
    here $c$ is strictly positive because $\sum_{n\ge0}2^{-n\beta}<\infty$.

    Consider $\mathrm{GP}(2)$ and take $o\in V(\mathrm{GP}(2))$ to be the first coordinate of $x$, with the weight normalization induced by $\w^x$. Call $o_n$ an $n$-th parent of $o$ if $\w^o(o_n)=2^n$ and there is a path connecting $o$ to $o_n$ along which the weights of vertices only increase. For each $n$, let $H_n$ consist of all vertices with weight at least $a$ for which $o_n$ is a $k$-th parent for some $k\ge0$. Note that the sets $H_n$ are finite, increasing, and exhaust the vertices of $\mathrm{GP}(2)$ whose weight is at least $a$.

    Let $b\in V(T_d)$ to be the first coordinate of $x$ and set $S_n$ to be the Cartesian product of $H_n$ and $\mathrm{Ball}_{T_d}(b,n).$ Then $(S_n)$ is an increasing sequence of finite sets exhausting $\{v\in G:\w^x(v)\ge a\}$. By construction, if the unique neighbor $v'$ of $v$ that has $\w^v(v')=2$ belongs to $S_n$ and $\w^x(v)\ge a$, then $v$ also belongs to $S_n$. Consequently, every upward ray starting in $\{v\in V:\w^x(v)\ge a\}\setminus S_n$ stays outside $S_n$.

    We sequentially reveal all labels on edges incident to $S_n$. Take $n$ large enough  $x,y\in S_n$. If there is no edge
    $(u,w)\in\partial_E S_n\cap\mathrm{Cone}(e,\prec_\beta)$ such that $u\notin S_n$ and $w$ is connected to $x$ by a path in
    $\mathrm{Cone}(e,\prec_\beta)$ lying inside $S_n$, then $C_{\prec_\beta}(x)$ must be finite.
    Otherwise, choose such an edge using the revealed labels. The upward ray from $u$ lies entirely outside $S_n$, so its labels
    remain independent and unrevealed. With conditional probability at least $c$, this entire ray belongs to $\mathrm{Cone}(e,\prec_\beta)$. On this event, $x$ is connected to $C_{\mathrm{tail}}(\beta)$. Thus
    \[
        \pr\biggl(|C_{\prec_\beta}(x)|=\infty,\,C_{\prec_\beta}(x)\neq C_{\mathrm{tail}}(\beta)\,\bigg|\,U_e,\,(U_f)_{f\cap S_n\neq\emptyset}\biggr)\le1-c.
    \]
    As $n\to\infty$, the revealed labels determine the entire $\mathrm{Cone}(e,\prec_\beta)$. By the martingale convergence theorem, the conditional probabilities above converge a.s.\ to the indicator of the conditioned event. Since $c>0$, that event has probability zero. The same argument applies to $y$.

    More generally, the same argument applies to every vertex $v$ of weight at least $a$. Hence, by countability, every infinite component equals $C_{\mathrm{tail}}(\beta)$ a.s. Finally, Lemmas~\ref{lem:wired-characterization} and~\ref{lem:free-characterization} then imply \eqref{eq:e_in_F_not_W}.

    The conclusion that $\fmax(G,\mvU)=\wmax(G,\mvU)$ follows immediately from the large-$\beta$ limits of $\mathfrak{W}_\beta$ or $\mathfrak{F}_\beta$ from Theorem~\ref{thm:local_lim_W}.\eqref{thm:local_lim_W_large} or Theorem~\ref{thm:local_lim_F_large_discr}, respectively. However, it is also easy to adopt the proof presented here to $\prec_{\w}$ for a direct argument.

    Finally, it remains to show that, when $d\ge18$, $p_h\left(G,\Gamma\right)<p_u\left(G\right)$. note that $G$ is $(d+8)$ regular and thus, letting $\rho(H)$ denote the spectral radius of the simple symmetric random walk on a graph $H$, by \cite[Proposition 7.34 and 7.35]{LyonsBook}
    \[
        \rho\left(G\right)\le\frac{d\rho(T_d)+8\rho(\mathrm{GP}(2))}{d+8}\le\frac{d\rho(T_d)+8}{d+8}=\frac{2\sqrt{d-1}+8}{d+8}.
    \]
    and thus by \cite[Theorem 7.32 and Lemma 7.33]{LyonsBook}
    \begin{equation}\label{T_d_pu}
        p_u(G)\ge \frac{1}{(d+8)\rho\left(G\right)}\ge \frac{1}{2\sqrt{d-1}+8}.
    \end{equation}
    At the same time we have
    \begin{equation}\label{T_d_ph}
        p_h(G,\Gamma)\le p_c(T_{d})=\frac{1}{d-1}.
    \end{equation}
    Putting \eqref{T_d_pu} and \eqref{T_d_ph} together we get that when $d\ge 18$
    \begin{equation}\label{T_d_ph_pu}
        p_h\left(G,\Gamma\right)\le\frac{1}{d-1}<\frac{1}{2\sqrt{d-1}+8}\le p_u\left(G\right).
    \end{equation}
\end{proof}

\begin{proof}[Proof of Observation~\ref{prop:prod_small_beta}] 
Part~\eqref{prop:prod_small_beta_cart_prod} follows directly from Theorems~\ref{thm:prod},~\ref{thm:ph_vs_pu}, and ~\ref{thm:local_lim_W}.
Part~\eqref{prop:prod_small_beta_T24} is trivial since $G_2=T_{2,4}$ is acyclic, and thus for any $\beta>0$
$
G_2=\mathfrak{F}_\beta(G_2,\mvU)=\fmsf(G,\mvU).
$
Finally, It is easy to see that $p_h(G_2)\le 1/2$ and $p_u(G_2)=1$ and thus by Theorem~\ref{thm:ph_vs_pu} we have $\mathfrak{W}_{0+}(G_2,\mvU)\neq\fmsf(G_2,\mvU)$.
    
\end{proof}

\bibliographystyle{alphaurl}
\bibliography{msf} 
\end{document}